\theoremstyle{plain}\newtheorem{definition}{Definition}[section]
\theoremstyle{definition}\newtheorem{theorem}{Theorem}[section]
\theoremstyle{plain}\newtheorem{lemma}[theorem]{Lemma}
\theoremstyle{plain}\newtheorem{coro}[theorem]{Corollary}
\theoremstyle{plain}\newtheorem{prop}[theorem]{Proposition}
\theoremstyle{remark}\newtheorem{remark}{Remark}[section]
\newcommand{\fbxoop}{\int_{\Omega_{\varepsilon}}\!\,\!\!\!\!\!  \!\!\!\hspace{-0.08cm}-}
\newcommand{\norm}[1]{\left\|#1\right\|}
\newcommand{\Div}{\mathrm{div}\,}
\newcommand{\B}{\Big}
\newcommand{\be}{\begin{equation}}
\newcommand{\ee}{\end{equation}}
 \newcommand{\ba}{\begin{aligned}}
 \newcommand{\ea}{\end{aligned}}
\newcommand{\fbxoo}{\int_{B_{\varepsilon}(0)}\!\!\!\!\!\!\!\!\!\!\!\!\!\!\!\! -~\,~\,~\,~\,}
  \newcommand{\f}{\frac}
  \newcommand{\ben}{\begin{enumerate}}
   \newcommand{\een}{\end{enumerate}}
\newcommand{\Rmnum}[1]{\expandafter\@slowromancap\romannumeral #1@}
\numberwithin{equation}{section}
\begin{document}
\title{A note on helicity conservation for compressible Euler equations in a bounded domain with vacuum}
\author{~ Yulin Ye\footnote{ School of Mathematics and Statistics,
		Henan University,
		Kaifeng, 475004,
		P. R. China. Email: ylye@vip.henu.edu.cn}     
    }
\date{}
\maketitle
\begin{abstract}
In this paper,   we consider the helicity conservation of weak solutions for the compressible Euler equations  in a bounded domain with general pressure law and   vacuum. We deduce a sufficient condition for a weak solution    conserving the helicity based on the interior Besov-VMO type regularity, the continuous conditions for  velocity and vorticity  near the boundary, and some regularities for density near vacuum.

  \end{abstract}
\noindent {\bf MSC(2000):}\quad 35Q30, 35Q35, 76D03, 76D05\\\noindent
{\bf Keywords:} compressible Euler equations;  helicity conservation;  vacuum 
\section{Introduction}
\label{intro}
\setcounter{section}{1}\setcounter{equation}{0}
Let $\Omega$ be a connected domain in $\mathbb{R}^3$ with $C^2$ boundary $\partial\Omega$. In the present paper, we are concerned with the helicity conservation of weak solutions for the following compressible  Euler equations
\begin{equation}\left\{\ba\label{CEuler}
\rho_t+\nabla \cdot (\rho v)=0, \ \ \ \ \ \ \ \ \ \ \ \ \ \ \ \ \ \ \ \ \ \ \ \ \ \ \ \ \ \ \ \ \ \ &\text{in}\ \Omega\times(0,T),\\
(\rho v)_{t} +\Div(\rho v\otimes v)+\nabla
p(\rho )=0,\ \ \  \ \ \ \ \ \ \ \ \ \ \ &\text{in}\ \Omega\times(0,T),\\
u(t,x)\cdot\overrightarrow{n}(x)=0,\ \ \ \ \ \ \ \ \ \ \ \ \ \ \ \ \ \ \ \ \ \ \ \ \ \ \ \ \ \ \ \  \ &\text{on}\ \partial\Omega\times(0,T),\\
\ea\right.\end{equation}
where  the scalar function $\rho$ and the unknown vector $v $ represent   the density and  velocity of the flow,  respectively; $p(\rho)$ stands for the general  pressure of the fluid, which satisfies $p(\rho)\in C([0,\infty))\cap C^2(0,\infty)$; $\overrightarrow{n}(x) $ denotes the outward normal vector field to the boundary $\partial \Omega$. It is noted that when $p(\rho)=\rho^\gamma $ with $\gamma >1$, then  system \eqref{CEuler}  is usually called isentropic compressible Euler equations.
To solve equations \eqref{CEuler}, the initial data
is  complemented   by
\begin{equation}\label{Euler1}
	\rho(0,x)=\rho_0(x),\ (\rho v)(0,x)=(\rho_0 v_0)(x),\ x\in \Omega,
\end{equation}
where we define $v_0 =0$ on the sets $\{x\in \Omega:\ \rho_0=0\}.$

Formally, when $\rho=constant$, then the compressible Euler equations \eqref{CEuler} reduce to the ideal incompressible Euler equations
\begin{equation}\left\{\begin{aligned}\label{Euler}
		&v_{t}+v\cdot \nabla v+\nabla \pi=0,~~\text{in}~~~~\Omega\times(0,\,T),\\ &\text{div}\, v=0,~~~~~~~~~~~~~~~~~~\text{in}~~~~\Omega\times(0,\,T),\\
		&v\cdot\overrightarrow{n}=0,~~~~~~~~~~~~~~~~~\text{on}~~~\partial\Omega\times(0,\,T),\\
		&v|_{t=0}=v_{0}(x)~~~~~~~~~~~~~~\text{on} ~~~ \Omega\times\{t=0\}.
	\end{aligned}\right.\end{equation} It is well-known that there exist two physically conserved  quantities for the regular solutions of the Euler system \eqref{Euler}:
 \begin{itemize}
\item  energy  conservation
\be\label{ec}
\f12\int_{\Omega}|v(t,x)|^{2}dx=\f12\int_{\Omega}|v(x,0)|^{2}dx;
\ee
\item helicity preservation
\be\label{hc}
\int_{\Omega} \omega(t,x)\cdot v(t,x)  dx=\int_{\Omega} \omega(x,0)\cdot v(x,0)  dx,
\ee
 \end{itemize}
where $\omega=curl v$ is the vorticity of the fluid.
The energy of fluid is
 the
core  in  Kologromv theory in 1941 and the Onsager conjecture in 1949.
   Onsager in \cite{[Onsager]} conjectured  that weak solutions of the Euler equation with
  H\"older continuity exponent $\alpha>\frac{1}{3}$ do conserve energy and that turbulent
  or anomalous dissipation occurs when $\alpha\leq \frac{1}{3}$.  For the recent advances in both two directions of the Onsager's conjecture, the readers can be referred to  \cite{[Eyink],[CET],[DR],[CCFS],[Shvydkoy2009],[Shvydkoy2010],[FW2018],[DS0],[DS1],[DS2],[DS3],[Isett]} and \cite{[WZ],[Yu],[Zhou]} for other related models. In \cite{[NNT2020]}, Nguyen, Nguyen and Tang deduced  the energy conservation criterion for weak solutions of the equations \eqref{CEuler} as follows:
\be\ba\notag
&0\leq \rho \in L^{\infty} (0, T;L^\infty(\mathbb{T}^{d} )) \cap L^{\infty} (0, T ; {B}_{3,c(\mathbb{N})}^{\f13} (\mathbb{T}^{d} ) ), v\in  L^{3} (0, T ; {B}_{3,c(\mathbb{N})}^{\f13} (\mathbb{T}^{d} ) ),\\
& {\bf 1}_{\rho\leq\delta_0}\nabla \rho\in L^\infty (0,T;\mathbb{T}^d),{\bf 1}_{\rho\leq\delta_0}\partial_t\rho\in L^{3} (0, T;\mathbb{T}^{d} )\ \text{and}\ \lim\limits_{\eta\to 0}| \{(x,t)\in Q_T, \rho \leq \eta\}|=0,\\
 \ea\ee
 where
  $\delta>0$. Moreover, in \cite{[NNT2020]}, for the bounded domain case, the following condition to handle the boundary layer is also required
  \be\ba\label{NNTboundarycondition}
&\B(\int_{0}^{T}\fbxoop~~ |v|^{3}dxdt\B)^{\f23}\B(\int_{0}^{T}\fbxoop~~ |v\cdot \vec{n}(x)|^{3}dxdt\B)^{\f13}=o(1), \ \text{as} \ \varepsilon\to 0,\\
&\int_{0}^{T}\fbxoop~~ |v\cdot \vec{n}(x)|^{3}dxdt=o(1), \ \text{as} \ \varepsilon\to 0,\ea\ee
where $\Omega_{\varepsilon}=\{x\in\Omega|d(x,\partial\Omega)\leq\varepsilon\}$.
 Subsequently,
   Swierczewska, Gwiazda, Titi and Wiedemann \cite{[BGSTW]}  showed that
   the spaces $B^{\alpha}_{3,\infty}(\Omega), \alpha>1/3$  to  $\underline{B}^{\f13}_{3,VMO}( \Omega)$
together with the boundary condition
 \be\label{1.8}
  \lim_{h\rightarrow0}\int_{0}^{T}\int_{\{x\in\Omega|\f{h}{2}<d(x,\partial\Omega)<h\}}\!\!\!\!\!\!\!\!\!\!\!\!\!\!\!\!\!\!\!\!\!\!\!\!\!\!\!\!\! \!\!\!\!\!\!\!\!\!\!\!\!\!\!\!\!\!\!\!\!\hspace{0.03cm}-~~~~~~~~~~~~~~~~~~~~~~
  \B|\B[\f{|m|^{2}}{2\rho}  +  \Pi (\rho )+\pi(\rho)\B]\f{m}{\rho}\cdot \vec{n}(\sigma(x))\B|dxdt=0\ee
 ensure that   the energy  is locally conserved in the sense of distributions.

The description of turbulence not only rests on the cascades of energy, but also the helicity. The interaction of the transfer of energy and helicity plays a critical  role in determining the direction of their  cascade.    History, the concept of the
 helicity  in an inviscid fluid was proposed  by Moffatt in \cite{[Moffatt]}. As pointed in \cite{[Moffatt],[MT]},
helicity is important at a fundamental level in relation to flow kinematics
because it admits topological interpretation in relation to the linkage or
linkages of vortex lines of the flow.
Indeed,
Moffatt \cite{[Moffatt]} examined that the    helicity of smooth solution of
the compressible Euler equations  \eqref{CEuler} is invariant in time. Regarding to helicity conservation of weak solutions  for homogeneous incompressible Euler equations,  Chae in \cite{[Chae]} considered the preservation of the helicity  of the weak solutions for the Euler equations  via $\omega$
in spaces $ L^{3}(0,T;B^{\alpha}_{\f95,\infty}) $ with $\alpha>\f13$.
Subsequently,
it is shown that $v \in L^{r_{1}}(0,T;B^{\alpha}_{\f92,q})$ and
$\omega \in  L^{r_{2}}(0,T;\in B^{\alpha}_{\f95,q})$ with $\alpha>\f13$, $q\in [2,\infty]$, $r_1\in [2,\infty]$, $r_2\in [1,\infty]$ and $\frac{2}{r_{1}}+\frac{1}{r_{2}}=1$  ensure  the helicity conservation of the weak solutions by Chae in \cite{[Chae1]}.  After that,  Cheskidov, Constantin, Friedlander and Shvydkoy \cite{[CCFS]} established the helicity conservation class based on the velocity $v$ in  Onsager's critical space
$   L^{3}(0,T;B^{\f23}_{3,c(\mathbb{N})}).$  De Rosa \cite{[De Rosa]} proved that the helicity is a constant provided
$v\in L^{2r}(0,T;W^{\theta, 2p})$ and  $ \omega\in L^{\kappa}(0,T;W^{\alpha, q})$ with  $\f{1}{p}+\f1q=\f{1}{r}+\f{1}{\kappa}$  and $2\theta+\alpha\geq1$.
 Very recently, the dissipation term resulted from the
lack of smoothness of the solutions for the helicity balance in \eqref{Euler} was recently deduced by Boutros  and  Titi in \cite{[BT]}.
The global helicity balance to the Euler equations \eqref{Euler} on bounded domains
in terms of the boundary contributions of the vorticity, velocity and pressure
under suitable regularity assumptions were presented by Inversi and M. Sorella \cite{[IS]}.
 However, there   has been few literature  concerning the preserving  helicity  of weak solutions for
the    compressible Euler equations \eqref{CEuler} allowing vacuum or on a bounded domain.

 The key difficulty in compressible case is that one can not get the equation of vorticity $\omega$ from the momentum equation directly like the incompressible case, due to the low regularity of $(\rho,v)$, the strong coupling of density $\rho$ and velocity $v$ and possible vacuum. Recently, the authors in \cite{[WWY]} observed that for any smooth solutions $(\rho, v)$ of compressible Euler equations \eqref{CEuler} with $0<c_1\leq \rho\leq c_2<\infty$, dividing the both sides of the momentum equation $\eqref{CEuler}_2$ by $\rho$, then we can reformulate the system    \eqref{CEuler} as
\be\left\{\ba\label{rCEuler1}
&\rho_t+\nabla \cdot (\rho v)=0, \\
& v_{t} + v\cdot\nabla v+\nabla
P(\rho )=0,
\ea\right.\ee
where  $P(\rho )= \int_{1}^{\rho}\f{p'(s)}{s}ds$.
 Then the momentum equation  in \eqref{rCEuler1} is the same as that for
the incompressible case.  However, this is not valid for weak solutions $(\rho,v)$ even with $0<c_1\leq \rho\leq c_2<\infty$. To do this, the author first proved that  the weak solution of compressible Euler equations \eqref{CEuler} is also the weak solution of \eqref{rCEuler1} in the sense of distributions by commutators techniques  under $0<c_1\leq \rho\leq c_2<\infty$, and then proved the helicity conservation for compressible Euler equations in the periodic domain $\mathbb{T}^d$ without vacuum. It should be noted that the proof in \cite{[WWY]}  strongly relies on the density away from vacuum and no boundary effect. Inspired by works \cite{[NNT2020],[IS]}, the main objective of this paper is  to give a sufficient condition on the helicity conservation of weak solutions to the compressible Euler equations \eqref{Euler} in a bounded domain and the density allowing vacuum.
We formulate our main result on the helicity conservation of weak solutions for the  compressible Euler equations \eqref{CEuler} as follows.
\begin{theorem}\label{the1}
	Let the pair $(\rho, v)$ be a weak solution of the compressible Euler equations  \eqref{CEuler} in the Definition of \ref{ceulerdefi} and $\omega\in C([0,T];L^{\f{3}{2}}(\Omega))$, $v\in C([0,T];L^{3}(\Omega))$.  Assume that	\begin{enumerate}[(i)]
		\item (Interior Besov regularity)  if $(\rho,v)$ satisfy
		\begin{equation}\label{a1}
			\ba
			&	0\leq \rho \leq c<\infty,~\rho \in L^3(0,T;B^{\f{1}{3}}_{3,c(\mathbb{N})}(\Omega))\cap  L^\infty(0,T;B^{\frac{1}{3}}_{\infty,\infty}(\Omega)), \\
					&v \in L^3(0,T;\underline{B}^{\frac{1}{3}}_{3,VMO}(\Omega))\  \text{and}~  \omega\in L^3(0,T;L^3(\Omega)) .\ea\end{equation}
		\item (Continuous condition near the boundary )
		\begin{equation}\label{a2}
		\lim\limits_{\varepsilon\to 0}\int_0^T {\int\!\!\!\!\!\!-~}_{\f{\varepsilon}{2}<d(x,\partial \Omega)<\varepsilon}	\B|\left(P(\rho )-\f12|v|^{2}\right)\omega\cdot  \vec{n}(\sigma(x))+\left(\omega\cdot v\right)v\cdot \vec{n}(\sigma(x))\B|dxdt
	=  0,
		\end{equation}
	where $P(\rho )= \int_{1}^{\rho}\f{p'(s)}{s}ds$.
		\item (Near the vacuum)for any $\delta>0$, if
		\begin{equation}\label{a4}
			\rho^{-1}{\bf 1}_{\rho \leq \delta}\in L^{r_1}(0,T;L^{r_1}(\Omega)), \partial_t \rho{\bf 1}_{\rho \leq \delta}\in L^{r_2}(0,T;L^{r_2}(\Omega)), ~\text{and}~ \nabla \rho{\bf 1}_{\rho \leq \delta}\in L^{r_3}(0,T;L^{r_3}(\Omega)),
		\end{equation}
	\end{enumerate}
	for some positive constant $c,r_1,r_2,r_3$ and some sufficiently small constant $\delta>0$ such that $\frac{1}{r_1}+\frac{1}{r_2}=\frac{2}{3}$ and $\frac{1}{r_1}+\frac{1}{r_3}=\frac{1}{3}$, then the helicity of compressible Euler equation is conserved, that is for any $0<t<T$,
	$$	\int_{\Omega} \omega(t,x)\cdot v(t,x)  dx=\int_{\Omega} \omega(x,0)\cdot v(x,0)  dx.$$
	
\end{theorem}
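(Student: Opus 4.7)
The plan is to combine the reformulation technique of \cite{[WWY]} with the boundary-strip approach of \cite{[BGSTW]} and the helicity-balance computation in the spirit of \cite{[CCFS],[IS]}, now dealing simultaneously with possible vacuum via $(a4)$ and with a boundary via $(a2)$. The backbone is the following formal identity: for smooth solutions of the reformulated equation $v_t + v\cdot\nabla v + \nabla P(\rho) = 0$, writing $v\cdot\nabla v = \nabla(|v|^2/2) - v\times\omega$, taking the curl to obtain $\omega_t = \nabla\times(v\times\omega)$, pairing the latter against $v$ and the former against $\omega$, and integrating by parts on $\Omega$, one arrives at
\[
\frac{d}{dt}\int_\Omega \omega\cdot v\, dx = -\int_{\partial\Omega}\Bigl\{\bigl(P(\rho) - \tfrac{1}{2}|v|^2\bigr)\,\omega\cdot\vec{n} + (\omega\cdot v)(v\cdot\vec{n})\Bigr\}\,dS,
\]
which is exactly the integrand controlled in $(a2)$. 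The $(v\cdot\omega)(v\cdot\vec{n})$ piece drops out for strong solutions with $v\cdot\vec{n}=0$ pointwise, but persists at the mollified/strip level, which is precisely why $(a2)$ carries both terms. The strategy is to realize this identity rigorously for weak solutions, with interior commutators killed by $(a1)$, the boundary strip killed by $(a2)$, and the singularity at vacuum killed by $(a4)$.

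The first step is to upgrade $(\rho v)_t + \mathrm{div}(\rho v\otimes v)+\nabla p(\rho)=0$ to $v_t + v\cdot\nabla v + \nabla P(\rho) = 0$ in $\mathcal{D}'(\Omega\times(0,T))$. I would mollify both the continuity and momentum equations in space, divide by the mollified density, and control the Friedrichs-type commutators by splitting $\Omega = \{\rho>\delta\}\cup\{\rho\leq\delta\}$. On the non-vacuum region the commutators vanish by the Besov regularity in $(a1)$ in the manner of \cite{[WWY]}. On the vacuum region the bounds in $(a4)$ enter through H\"older's inequality: the exponent matchings $\frac{1}{r_1}+\frac{1}{r_2} = \frac{2}{3}$ and $\frac{1}{r_1}+\frac{1}{r_3} = \frac{1}{3}$ are tuned so that commutators involving $\rho^{-1}\partial_t\rho$ and $\rho^{-1}\nabla\rho$, multiplied against the $v$-expressions of regularity $L^3_{t,x}$ afforded by $(a1)$, close in $L^1_{t,x}$; letting $\varepsilon\to 0$ first and $\delta\to 0$ afterwards yields the reformulated equation globally.

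With the reformulated equation in hand, its distributional curl gives $\omega_t = \nabla\times(v\times\omega)$. Mollifying in space and pairing $\omega_\varepsilon$ against the momentum equation and $v_\varepsilon$ against the vorticity equation on the interior region $\{d(x,\partial\Omega)>\varepsilon\}$ produces an approximate helicity balance with three types of errors: interior Constantin--E--Titi commutators from $v\cdot\nabla v$ paired with $\omega$, controlled by $v\in L^3(0,T;\underline{B}^{1/3}_{3,\mathrm{VMO}})$ together with $\omega\in L^3(0,T;L^3)$ (the $\mathrm{VMO}$ refinement providing the $o(1)$ on the critical scale $3\cdot\tfrac{1}{3}=1$); interior commutators from $\omega\cdot\nabla P(\rho)$, controlled by $\rho\in L^3(0,T;B^{1/3}_{3,c(\mathbb{N})})\cap L^\infty$ from $(a1)$; and boundary-strip residues left by the cutoff $\{d(x,\partial\Omega)>\varepsilon\}$, which reorganize into exactly the averaged surface integrand appearing in $(a2)$ and therefore vanish by hypothesis. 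Combining with the first step and passing to the limit delivers the claimed conservation.

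The principal obstacle, in my view, is the simultaneous coordination of the interior, vacuum and boundary-strip limits inside a single $L^1_{t,x}$-bound. In particular the pressure commutator $\omega_\varepsilon\cdot\nabla P(\rho)_\varepsilon$, after being rewritten as $\omega_\varepsilon\cdot(p'(\rho)\rho^{-1}\nabla\rho)_\varepsilon$ plus Besov remainders, must be estimated uniformly across the three regions, and the order of limits ($\varepsilon\to 0$ before $\delta\to 0$) must be preserved while the averaged surface integral of $(a2)$ and the vacuum integrals of $(a4)$ remain under independent control. Once this orchestration is in place, the boundary-layer analysis follows the template of \cite{[BGSTW],[IS]} and the vacuum analysis follows the commutator scheme of \cite{[WWY],[NNT2020]}.
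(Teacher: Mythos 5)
Your proposal follows essentially the same route as the paper: first upgrade the momentum equation to $v_t+v\cdot\nabla v+\nabla P(\rho)=0$ by mollifying, dividing by the (mollified) density with a vacuum cutoff, and closing the commutators via the Besov--VMO estimates and the H\"older matchings $\tfrac{1}{r_1}+\tfrac{1}{r_2}=\tfrac23$, $\tfrac{1}{r_1}+\tfrac{1}{r_3}=\tfrac13$ (the paper's Proposition 3.1, with a smooth cutoff $\phi_m(\rho^{\varepsilon})$ playing the role of your $\{\rho>\delta\}$ split and the same order of limits); then derive the local helicity balance by pairing the mollified momentum and vorticity equations, and finally test with $\theta_\varepsilon(x)=\psi(d(x,\partial\Omega)/\varepsilon)$ so that the boundary-strip residue is exactly the averaged integrand killed by hypothesis \eqref{a2}. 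The formal balance you display is precisely the identity the paper realizes, so the plan is correct and matches the paper's proof.
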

To prove this theorem, we apply the  test function used by Nguyen, Nguyen and Tang in \cite{[NNT2020]} to capture the effect of the boundary and allow vacuum.
A slight modification the proof of this theorem together with the test function in  \cite{[BGSTW]}
yields that
\begin{coro}\label{coro1}
	Let $v$ be a weak solution of the incompressible Euler equations \eqref{Euler} and $\omega\in C([0,T];L^{\f{3}{2}}(\Omega))$, $v\in C([0,T];L^{3}(\Omega))$.  Suppose that $v$ and $\omega$ satisfy one of the following three conditions
 \begin{enumerate}[(1)]
  \item
    $            v\in L^{3} (0,T; \underline{B}^{\f23}_{3,VMO}(\Omega));$
 \item $ v\in L^{p_{1}} (0,T;\underline{B}^{\alpha}_{q_{1},VMO}  (\Omega)), \omega\in L^{ p_{2}  } (0,T; B^{\beta}_{q_{2},\infty}(\Omega)),$ with $\f{2}{p_1}+\f{1}{p_2}=1,\f{2}{q_1}+\f{1}{q_2}=1,2\alpha+\beta\geq1;$
\item $ v\in L^{p_{1}} (0,T;B^{\alpha}_{q_{1},\infty} (\Omega)), \omega\in L^{ p_{2}  } (0,T; \underline{B}^{\beta}_{q_{2},VMO} (\Omega)),$ with $\f{2}{p_1}+\f{1}{p_2}=1,\f{2}{q_1}+\f{1}{q_2}=1,2\alpha+\beta\geq1.$
      \end{enumerate}
Meanwhile, there holds
\be\ba\label{HCboundary}
&\lim_{\varepsilon\rightarrow0}\int_{0}^{T}\fbxoo
  \B|  [\Pi
 \omega+\f12|v |^{2}]\cdot \omega\cdot n(\sigma(x))  \B|dxdt\\&+  \lim_{\varepsilon\rightarrow0}\int_{0}^{T}\fbxoo
  \B| (v\times\omega)\cdot (  v\times n(\sigma(x)))  \B|dxdt=0.
      \ea\ee
      Then the helicity is constant, that is 
      $$\f{d}{dt}\int_{\Omega}v\cdot\omega dx=0$$
      holds in the sense of distributions.
	\end{coro}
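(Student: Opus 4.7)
The plan is to adapt the machinery set up for Theorem \ref{the1} to the incompressible case $\rho\equiv 1$, in which hypothesis (iii) and the density-regularity in (i) become trivial and the boundary condition (ii) is replaced by \eqref{HCboundary}. The pointwise identity to be justified is
\begin{equation*}
\partial_t(v\cdot\omega)+\mathrm{div}\bigl[(v\cdot\omega)\,v+(\pi-\tfrac12|v|^2)\,\omega\bigr]=0,
\end{equation*}
which follows formally by dotting the momentum equation with $\omega$, dotting the vorticity equation $\omega_t+\mathrm{curl}(\omega\times v)=0$ with $v$, and summing, using $\omega\cdot(\omega\times v)=0$ and the identity $v\cdot\mathrm{curl}(\omega\times v)=-\nabla|v|^2\cdot\omega+v\cdot\nabla(v\cdot\omega)$.

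The strategy is to repeat the mollification procedure of Theorem \ref{the1} but with the boundary test function of \cite{[BGSTW]} in place of the \cite{[NNT2020]} one. Concretely, mollify $v$ by a standard $\eta_\varepsilon$, choose a spatial cutoff $\phi_h$ equal to $1$ on $\{d(x,\partial\Omega)>h\}$ and vanishing on $\{d(x,\partial\Omega)<h/2\}$, and test the mollified momentum equation against $\omega_\varepsilon\phi_h$ (and symmetrically the vorticity equation against $v_\varepsilon\phi_h$). After integration by parts this produces
\begin{equation*}
\frac{d}{dt}\int_\Omega(v_\varepsilon\cdot\omega_\varepsilon)\phi_h\,dx=\mathcal{I}_{\mathrm{comm}}(\varepsilon,h)+\mathcal{I}_{\mathrm{bd}}(\varepsilon,h),
\end{equation*}
where $\mathcal{I}_{\mathrm{comm}}$ collects the Constantin--E--Titi commutators stemming from $(v\cdot\nabla v)_\varepsilon-v_\varepsilon\cdot\nabla v_\varepsilon$ and its curl, and $\mathcal{I}_{\mathrm{bd}}$ gathers the $\nabla\phi_h$-terms supported on the shell $\{h/2<d(x,\partial\Omega)<h\}$.

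For the commutator term, under (1) one distributes two thirds of a derivative on $v$ as in the proof of the classical Onsager conjecture for incompressible Euler, and the VMO/$c(\mathbb{N})$ defect forces the remainder to zero as $\varepsilon\to 0$. Under (2) and (3) the derivatives are distributed asymmetrically between $v$ and $\omega$ via H\"older with the exponent balance $\tfrac{2}{p_1}+\tfrac{1}{p_2}=1$, $\tfrac{2}{q_1}+\tfrac{1}{q_2}=1$, together with the regularity budget $2\alpha+\beta\geq 1$; the VMO property of the relevant factor supplies the small gain required to eliminate the critical remainder in the limit. Thus $\mathcal{I}_{\mathrm{comm}}(\varepsilon,h)\to 0$ as $\varepsilon\to 0$ for each fixed $h>0$.

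Once $\varepsilon\to 0$ is taken, the boundary piece $\mathcal{I}_{\mathrm{bd}}$ reduces to an integral over $\{h/2<d(x,\partial\Omega)<h\}$ of exactly the combination bounded in \eqref{HCboundary}, once one rewrites $v\times(v\times\omega)=(v\cdot\omega)\,v-|v|^2\omega$ and uses $v\cdot\vec n|_{\partial\Omega}=0$ (which promotes $n$ to $n(\sigma(x))$ on the shell). The hypothesis \eqref{HCboundary} then sends $\mathcal{I}_{\mathrm{bd}}\to 0$ as $h\to 0$, and pairing with an arbitrary $\psi\in C_c^\infty(0,T)$ yields $\frac{d}{dt}\int_\Omega v\cdot\omega\,dx=0$ in $\mathcal{D}'(0,T)$. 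The principal technical obstacle, as for Theorem \ref{the1}, is the coordination of the two scales: the commutator analysis must be carried out with $\eta_\varepsilon$ supported inside $\{d(x,\partial\Omega)>h/4\}$ so that the VMO gain is entirely interior and independent of $h$, and only after $\varepsilon\to 0$ may one send $h\to 0$ to close the boundary layer via \eqref{HCboundary}.
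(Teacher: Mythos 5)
Your proposal follows essentially the same route as the paper: it is the proof of Theorem \ref{the1} specialized to $\rho\equiv 1$ (so the vacuum and density hypotheses drop out), with the local helicity balance obtained by mollifying the momentum and vorticity equations, controlling the Constantin--E--Titi commutators via the Besov/VMO estimates of Lemma \ref{lem2.3}, and then closing the boundary layer with the \cite{[BGSTW]} cutoff and hypothesis \eqref{HCboundary}, taking $\varepsilon\to 0$ before $h\to 0$. This is exactly the "slight modification" the paper intends, and your two-scale remark is handled there by first proving the local balance for an arbitrary fixed $\phi\in C_0^\infty(\Omega)$ and only afterwards substituting the boundary cutoff.
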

The rest of this paper is organized as follows.
 In Section 2, we present some notations and  auxiliary lemmas which will be used in the present paper.  In the spirit of \cite{[Yu]},  we  will  show 
   the      Constantin-E-Titi type commutator estimates   in a bounded domain  in terms of
   the  functions  in Besov-VMO type spaces $\underline{B}^{\f13}_{p,VMO}$, which plays an important role in the  following study.
 Section 3 is devoted to  the proof of helicity conservation of weak solutions for the compressible   Euler equations in a bounded domain.
\section{Notations and some auxiliary lemmas} \label{section2}

First, we introduce some notations used in this paper. For a bounded domain $\Omega\subset \mathbb{R}^d$, we denote $ dist(x,\partial\Omega)$ as the distance function for any $x\in \Omega $, and define the following subdomains as follows
$$\Omega_{\varepsilon}=\{x\in \Omega| dist(x,\partial\Omega)< \varepsilon\}, \ \Omega^\varepsilon=\Omega\setminus \overline{\Omega}_\varepsilon,$$
where $\overline{\Omega}_\varepsilon$ represents the closure of $\Omega_\varepsilon$ in the Euclidean topology. For any Borel set $E\subset \mathbb{R}^d$, we denote ${\int_{E}\!\!\!\!\!\!\!\! -~}=\f{1}{\mathcal {L}(E)}\int _E f dx$
.

Next, we let $\varrho:\mathbb{R}^{d}\rightarrow \mathbb{R}$ be a standard mollifier.i.e. $\varrho(x)=C_0e^{-\frac{1}{1-|x|^2}}$ for $|x|<1$ and $\varrho(x)=0$ for $|x|\geq 1$, where $C_0$ is a constant such that $\int_{\mathbb{R}^d}\varrho (x) dx=1$. For $l>0$, we define the rescaled mollifier $\varrho_\varepsilon(x)=\frac{1}{\varepsilon^d}\varrho(\frac{x}{\varepsilon})$ and for  any function $f\in L^1_{loc}(\Omega)$, its mollified version is defined as
$$f^\varepsilon(x)=(f*\varrho_{\varepsilon})(x)=\int_{B_\varepsilon(0)}f(x-y)\varrho_\varepsilon(y)dy,\ \ x\in \Omega^\varepsilon.$$
Eventually, we introduce some function spaces.\\
 $\mathbf{Sobelev \ spaces.}$  For $p\in [1,\,\infty]$, the notation $L^{p}(0,\,T;X)$ stands for the set of measurable functions on the interval $(0,\,T)$ with values in $X$ and $\|f(t,\cdot)\|_{X}$ belonging to $L^{p}(0,\,T)$. The classical Sobolev space $W^{k,p}(\Omega)$ is equipped with the norm $\|f\|_{W^{k,p}(\Omega)}=\sum\limits_{|\alpha| =0}^{k}\|D^{\alpha}f\|_{L^{p}(\Omega)}$.\\
 {\bf Besov spaces.} For any  $0<s<1$ and $1\leq p \leq \infty$, we define the Besov semi-norm $\norm{f}_{\dot{B}^s_{q,\infty}(\Omega)}$ and Besov norm $\norm{f}_{B^s_{q,\infty}(\Omega)}$ of $f\in \mathcal{S}^{'}$ as
	\begin{equation}\label{2.3}\|f\|_{\dot{B}_{q, \infty}^s(\Omega)}=\sup_{|y|>0}\f{\| f(x-y)-f(x)\|_{L^q(\Omega\cap (\Omega +\{y\}))}}{|y|^\alpha},
	\norm{f}_{B^s_{q,\infty}(\Omega)}=\norm{f}_{{L^q}(\Omega)}+\norm{f}_{\dot{B}^s_{q,\infty}(\Omega)},\end{equation}
	where $\Omega +\{y\}:=\{x\in \mathbb{R}^d| x=z+y, z\in \Omega\}$ with any $y\neq 0$ denoting the translation of the domain $\Omega $ according to $y$. Moreover, motivated by \cite{[CCFS],[WWY]} and \cite{[FW2018],[YWL]}, we can define the Onsager's critical space $B^\alpha _{p,c(\mathbb{N})}$ and Besov VMO type space $\underline{B}^\alpha_{p,VMO}$ as follows.
	
	We denote ${B}^\alpha _{q,c(\mathbb{N})}$ with $\alpha\in (0,1)$ and $1\leq q \leq \infty$ as the class of all tempered distributions $f$ satisfying
	\begin{equation}\label{2.4}
		\norm{f}_{{B}^\alpha _{q,\infty}(\Omega)}<\infty~ \text{and}~ 	\lim_{|y|\rightarrow 0} \f{\| f(x-y)-f(x)\|_{L^q(\Omega\cap (\Omega +\{y\}))}}{|y|^\alpha}=0.
	\end{equation}
And for any $\Omega^{'}\subset\subset\Omega$, we can formulate
the Besov-VMO type space  $\underline{B}^{\alpha}_{q,VMO}(\Omega)$  of function $f$ if it satisfies
\begin{equation}\label{2.5}
	\|f\|_{L^q(\Omega)}<\infty,\ \text{and}\ \lim_{\varepsilon\rightarrow0}\inf\f{1}{\varepsilon^{\alpha}}\B(\int_{\Omega^{'}} \fbxoo|f(x)-f(x-y)|^{q}dydx \B)^{\f{1}{q}}=0.
\end{equation}
Note that, combining the definitions of spaces $B^\alpha_{q,c(\mathbb{N})}$ and $\underline{B}^\alpha_{q,VMO}$ above, we can deduce that
\begin{equation}\label{2.6}
{B}^\alpha_{q,c(\mathbb{N})}(\Omega)\hookrightarrow \underline{B}^\alpha_{q,VMO}(\Omega)\hookrightarrow B^\alpha_{q,\infty}(\Omega),
\end{equation} for any $1\leq q\leq +\infty$ and $0<\alpha<1$.

Next, we collect some auxiliary lemmas which will be used in the present paper.
\begin{lemma}\label{lem2.1}(\cite{[BGSTW]})
Let $\alpha, \beta\in (0,1)$,  $ q\in [1,\infty]$,  and $k\in \mathbb{N}^+$.  Assume that  $f\in {B}^\alpha_{q,c(\mathbb{N})}(\Omega)$, $g\in \underline{B}^\beta_{q,VMO}(\Omega)$,  for any $\Omega^{'}\subset\subset\Omega$, then  we choose $\varepsilon$
sufficiently small such that $\varepsilon$-neighbourhood of $\Omega^{'}$ being in $\Omega$, as $\varepsilon\to 0$, there holds that
 \begin{enumerate}[(1)]
 \item $ \|f^{\varepsilon} -f \|_{L^{q}(\Omega^{'})}\leq C\varepsilon^{\alpha}\|f\|_{{B}^\alpha_{q,c(\mathbb{N})}(\Omega)}\leq \text{o}(\varepsilon^{\alpha})$;
   \item   $ \|\nabla^{k}f^{\varepsilon}  \|_{L^{q}(\Omega^{'})}\leq C\varepsilon^{\alpha-k}\|f\|_{{B}^\alpha_{q,c(\mathbb{N})}(\Omega)}\leq  \text{o}(\varepsilon^{\alpha-k})$;
       \item $ \|g^{\varepsilon} -g \|_{L^{q}(\Omega^{'})}\leq C\varepsilon^{\beta}\|g\|_{\underline{B}^\beta_{q,VMO}(\Omega)}\leq \text{o}(\varepsilon^{\beta})$;
   \item   $ \|\nabla^{k}g^{\varepsilon}  \|_{L^{q}(\Omega^{'})}\leq C\varepsilon^{\beta-k}\|g\|_{\underline{B}^\beta_{q,VMO}(\Omega)}\leq \text{o}(\varepsilon^{\beta-k})$.
 \end{enumerate}
\end{lemma}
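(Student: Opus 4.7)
The plan is to prove each bound by writing $f^\varepsilon - f$ (respectively $\nabla^k f^\varepsilon$) as an averaged translation difference and then transferring regularity in the manner appropriate to each function space. Throughout I take $\varepsilon$ small enough that $B_\varepsilon(x) \subset \Omega$ for every $x \in \Omega'$, which ensures the translation differences $f(x-y) - f(x)$ make sense for $x \in \Omega'$, $|y| \leq \varepsilon$, i.e.\ $\Omega' \subset \Omega \cap (\Omega + \{y\})$. For parts (1) and (2) concerning $f \in B^\alpha_{q,c(\mathbb{N})}(\Omega)$, I start from
$$f^\varepsilon(x) - f(x) = \int_{B_\varepsilon(0)} \bigl[f(x-y) - f(x)\bigr]\varrho_\varepsilon(y)\,dy$$
and apply Minkowski's integral inequality on $L^q(\Omega')$; the supremum over $|y| \leq \varepsilon$ of $\|f(\cdot - y) - f(\cdot)\|_{L^q(\Omega\cap(\Omega+\{y\}))}$ is then controlled by the Besov semi-norm in \eqref{2.3} times $\varepsilon^\alpha$, yielding (1). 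For (2) the key input is the cancellation identity $\int \nabla^k \varrho_\varepsilon(y)\,dy = 0$ for $k \geq 1$ (integration by parts against a compactly supported kernel), which gives
$$\nabla^k f^\varepsilon(x) = \int_{B_\varepsilon(0)} \bigl[f(x-y) - f(x)\bigr]\nabla^k \varrho_\varepsilon(y)\,dy;$$
together with the scaling $\|\nabla^k \varrho_\varepsilon\|_{L^1} \leq C\varepsilon^{-k}$, the same Minkowski argument produces the rate $\varepsilon^{\alpha - k}$.

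For the VMO parts (3) and (4), the translation differences are controlled only in an averaged sense, so I replace the Minkowski supremum by Jensen's inequality. Using $\varrho_\varepsilon(y) \leq C\varepsilon^{-d}\mathbf{1}_{B_\varepsilon(0)}(y)$, I get the pointwise estimate
$$|g^\varepsilon(x) - g(x)|^q \leq C \fbxoo |g(x-y) - g(x)|^q\,dy,$$
and integrating over $\Omega'$ reproduces exactly the averaged double integral in the definition \eqref{2.5} of $\underline{B}^\beta_{q,VMO}$, controlled by $C\varepsilon^{\beta q}\|g\|_{\underline{B}^\beta_{q,VMO}(\Omega)}^q$; taking the $q$-th root gives (3). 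For (4) I combine the cancellation identity from (2) with the scaling $|\nabla^k \varrho_\varepsilon| \leq C\varepsilon^{-d-k}\mathbf{1}_{B_\varepsilon(0)}$ and Jensen's inequality, which produces the same averaged double integral multiplied by an extra factor $\varepsilon^{-kq}$, and hence the claimed $\varepsilon^{\beta - k}$ rate.

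The refinements from $O(\varepsilon^\alpha)$ to $o(\varepsilon^\alpha)$ and from $O(\varepsilon^\beta)$ to $o(\varepsilon^\beta)$ follow immediately from the vanishing conditions built into \eqref{2.4} and \eqref{2.5}: the $c(\mathbb{N})$ condition says that $|y|^{-\alpha}\|f(\cdot-y) - f(\cdot)\|_{L^q}$ tends to zero as $|y| \to 0$, which I substitute into the Minkowski estimates above, and the VMO condition is literally the $\varepsilon^{-\beta}$-renormalized analogue for the averaged double integral. I do not anticipate a substantial obstacle; the main care points are the geometric setup near $\partial\Omega$ (the reason for restricting to $\Omega' \subset\subset \Omega$) and verifying the cancellation identity for derivatives of the mollifier, both of which are routine consequences of the compact support and symmetry of $\varrho_\varepsilon$.
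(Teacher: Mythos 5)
Your proposal is correct. Note that the paper itself gives no proof of this lemma -- it is quoted from \cite{[BGSTW]} -- but your argument (averaged translation differences plus Minkowski for the $B^\alpha_{q,c(\mathbb{N})}$ parts, the cancellation $\int\nabla^k\varrho_\varepsilon\,dy=0$ with the scaling $\|\nabla^k\varrho_\varepsilon\|_{L^1}\leq C\varepsilon^{-k}$ for the derivative bounds, and Jensen's inequality to reduce the VMO parts to the averaged double integral in \eqref{2.5}) is exactly the standard proof used in that reference, including the correct observation that only Jensen, not a supremum over translations, is available in the VMO case. The only cosmetic remark is that the cancellation identity needs only the compact support of $\varrho_\varepsilon$, not its symmetry.
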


\begin{lemma}\label{lem2.2}
	Let $ p,q,p_1,q_1,p_2,q_2\in[1,+\infty)$ with $\frac{1}{p}=\frac{1}{p_1}+\frac{1}{p_2},\frac{1}{q}=\frac{1}{q_1}+\frac{1}{q_2} $. Assume $f\in L^{p_1}(0,T;L^{q_1}(\Omega)) $ and $g\in L^{p_2}(0,T;L^{q_2}(\Omega))$.   For any $\Omega^{'}\subset\subset\Omega$, then  we choose $\varepsilon$
	sufficiently small such that $\varepsilon$-neighbourhood of $\Omega^{'}$ being in $\Omega$, as $\varepsilon\to 0$, there holds that
	\begin{equation}\label{b6}
		\|(fg)^{\varepsilon}-f^{\varepsilon} g^{\varepsilon}\|_{L^p(0,T;L^q(\Omega^{'}))}\rightarrow 0,
	\end{equation}
	and
	\begin{equation}\label{b7}
		\|(f\times g)^\varepsilon-f^{\varepsilon}\times g^{\varepsilon}\|_{L^p(0,T;L^q(\Omega^{'}))}\rightarrow 0.
	\end{equation}
\end{lemma}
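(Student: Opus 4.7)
The starting point is the classical Constantin--E--Titi commutator identity
\begin{equation*}
(fg)^{\varepsilon}-f^{\varepsilon}g^{\varepsilon} \;=\; r_{\varepsilon}(f,g)\;-\;(f^{\varepsilon}-f)(g^{\varepsilon}-g),
\end{equation*}
where the remainder is defined by
\begin{equation*}
r_{\varepsilon}(f,g)(t,x)\;:=\;\int_{B_{\varepsilon}(0)}\varrho_{\varepsilon}(y)\bigl(f(t,x-y)-f(t,x)\bigr)\bigl(g(t,x-y)-g(t,x)\bigr)\,dy.
\end{equation*}
This reduces matters to showing that both terms on the right tend to zero in $L^{p}(0,T;L^{q}(\Omega^{'}))$ as $\varepsilon\to 0$, from which \eqref{b6} is immediate.

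For the quadratic piece, H\"older's inequality in space and then in time yields
\begin{equation*}
\|(f^{\varepsilon}-f)(g^{\varepsilon}-g)\|_{L^{p}(0,T;L^{q}(\Omega^{'}))} \le \|f^{\varepsilon}-f\|_{L^{p_{1}}(0,T;L^{q_{1}}(\Omega^{'}))}\,\|g^{\varepsilon}-g\|_{L^{p_{2}}(0,T;L^{q_{2}}(\Omega^{'}))}.
\end{equation*}
Since every exponent $p_{i},q_{i}$ is finite, the standard convergence of mollifications on the interior subdomain (where all $\varepsilon$-translates stay inside $\Omega$) gives $f^{\varepsilon}\to f$ in $L^{p_{1}}(0,T;L^{q_{1}}(\Omega^{'}))$ and similarly for $g$; so this term vanishes. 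For the commutator remainder I would apply Minkowski's integral inequality, pulling the $y$-integration outside, and then H\"older in the inner norm to obtain
\begin{equation*}
\|r_{\varepsilon}(f,g)\|_{L^{p}(0,T;L^{q}(\Omega^{'}))} \le \int_{B_{\varepsilon}(0)}\varrho_{\varepsilon}(y)\,\|\tau_{y}f-f\|_{L^{p_{1}}(0,T;L^{q_{1}}(\Omega^{'}))}\,\|\tau_{y}g-g\|_{L^{p_{2}}(0,T;L^{q_{2}}(\Omega^{'}))}\,dy,
\end{equation*}
where $\tau_{y}f(t,x)=f(t,x-y)$. Continuity of translation in the (finite-exponent) mixed norms $L^{p_{i}}(0,T;L^{q_{i}})$ forces $\|\tau_{y}f-f\|\to 0$ and $\|\tau_{y}g-g\|\to 0$ as $|y|\to 0$; since $|y|\le\varepsilon$ on the support of $\varrho_{\varepsilon}$, the $y$-integral is controlled by a product of vanishing suprema and therefore tends to zero.

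The cross-product identity \eqref{b7} follows from \eqref{b6} by componentwise expansion: each scalar component of $f\times g$ is a finite difference of products of scalar components of $f$ and $g$, hence obeys the same commutator identity componentwise, and finitely many applications of the already-established scalar statement close the argument. The only bookkeeping requirement is that the $\varepsilon$-neighborhood of $\Omega^{'}$ lie inside $\Omega$, which is the standing assumption on $\varepsilon$; this is the main (but mild) technical subtlety, and I do not expect any genuine analytic obstacle beyond it.
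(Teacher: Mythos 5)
Your proof is correct, but it takes a different route from the paper's. The paper proves \eqref{b6} by a plain add-and-subtract: it splits $(fg)^{\varepsilon}-f^{\varepsilon}g^{\varepsilon}$ into the three pieces $(fg)^{\varepsilon}-fg$, $(f-f^{\varepsilon})g$, and $f^{\varepsilon}(g-g^{\varepsilon})$, estimates the latter two by H\"older with the exponent splitting $\tfrac1p=\tfrac1{p_1}+\tfrac1{p_2}$, $\tfrac1q=\tfrac1{q_1}+\tfrac1{q_2}$, and invokes standard strong convergence of mollifications (valid since all exponents are finite and $fg\in L^{p}(0,T;L^{q})$ by H\"older). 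You instead invoke the Constantin--E--Titi identity and control the remainder $r_{\varepsilon}(f,g)$ via Minkowski, H\"older, and continuity of translation in the mixed norms. Both arguments are sound; the paper's is slightly more elementary here (no translation-continuity step is needed, only mollifier convergence), while yours is exactly the decomposition the paper saves for Lemma \ref{lem2.3}, where the quantitative rate $\text{o}(\varepsilon^{\alpha+\beta})$ in Besov--VMO spaces is extracted from the same two terms --- so your argument is the one that upgrades to the rated estimate, at the cost of being mildly heavier than necessary for the purely qualitative statement \eqref{b6}. Your treatment of the cross product by componentwise expansion into differences of scalar commutators coincides with the paper's computation in \eqref{b8}.
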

\begin{proof}
	First, it follows from the triangle inequality that
	\begin{equation*}
		\begin{aligned}
			&\|(fg)^{\varepsilon}-f^{\varepsilon} g^{\varepsilon}\|_{L^p(0,T;L^q(\Omega^{'}))}\\
			\leq & C\left(\|(fg)^{\varepsilon}- (fg)\|_{L^p(0,T;L^q(\Omega^{'}))}+\|fg-f^{\varepsilon} g\|_{L^p(0,T;L^q(\Omega^{'}))}+\|f^{\varepsilon} g-f^{\varepsilon} g^{\varepsilon}\|_{L^p(0,T;L^q(\Omega^{'}))}\right)\\
			\leq &C\Big(\|(fg)^{\varepsilon}- fg\|_{L^p(0,T;L^q(\Omega^{'}))}+\|f-f^{\varepsilon}\|_{L^{p_1}(0,T;L^{q_1}(\Omega^{'}))}\|g\|_{L^{p_2}(0,T;L^{q_2}(\Omega^{'}))}\\
			&\ \ \ \ \ +\|f^{\varepsilon}\|_{L^{p_1}(0,T;L^{q_1}(\Omega^{'}))}\|g-g^{\varepsilon}\|_{L^{p_2}(0,T;L^{q_2}(\Omega^{'}))}\Big),
		\end{aligned}
	\end{equation*}
	then, together with the standard properties of  mollifiers, we can obtain \eqref{b6}.
	
	Furthermore, to conclude \eqref{b7}, without loss of generality, we assume $f=<f_1,f_2,f_3>$ and $g=<g_1,g_2,g_3>$.
	Then by the direct computation, we have
	\begin{equation}
		\begin{aligned}\label{b8}
			&\|(f\times g)^\varepsilon-f^{\varepsilon} \times g^{\varepsilon}\|_{L^p(0,T;L^q(\Omega^{'}))}=\Big\|\left(\left|
			\begin{matrix}
				\overrightarrow{i}&\overrightarrow{j}&\overrightarrow{k}\\
				f_1&f_2&f_3\\
				g_1&g_2&g_3\\
			\end{matrix}
			\right|\right)^\varepsilon-\left|
			\begin{matrix}
				\overrightarrow{i}&\overrightarrow{j}&\overrightarrow{k}\\
				f_1^\varepsilon&f_2^\varepsilon&f_3^\varepsilon\\
				g_1^\varepsilon&g_2^\varepsilon&g_3^\varepsilon\\
			\end{matrix}
			\right|\Big\|_{L^p(0,T;L^q(\Omega^{'}))}\\
			=&\Big\|\left((f_2g_3-f_3g_2)\overrightarrow{i}-(f_1g_3-f_3g_1)\overrightarrow{j}+(f_1g_2-f_2g_1)\overrightarrow{k}\right)^\varepsilon\\
			&~~~~-\left((f_2^\varepsilon g_3^\varepsilon-f_3^\varepsilon g_2^\varepsilon)\overrightarrow{i}-(f_1^\varepsilon g_3^\varepsilon-f_3^\varepsilon g_1^\varepsilon)\overrightarrow{j}+(f_1^\varepsilon g_2^\varepsilon-f_2^\varepsilon g_1^\varepsilon)\overrightarrow{k}\right)\Big\|_{L^p(0,T;L^q(\Omega^{'}))}\\
			=&\Big\|\left[\Big((f_2g_3)^\varepsilon-f_2^\varepsilon g_3^\varepsilon\Big)-\Big((f_3g_2)^\varepsilon-f_3^\varepsilon g_2^\varepsilon\Big)\right]\overrightarrow{i}-\left[\Big((f_1g_3)^\varepsilon-f_1^\varepsilon g_3^\varepsilon\Big)-\Big((f_3g_1)^\varepsilon-f_3^\varepsilon g_1^\varepsilon\Big)\right]\overrightarrow{j}\\
			&+\left[\Big((f_1g_2)^\varepsilon-f_1^\varepsilon g_2^\varepsilon\Big)-\Big((f_2g_1)^\varepsilon-f_2^\varepsilon g_1^\varepsilon\Big)\right]\overrightarrow{k}\Big\|_{L^p(0,T;L^q(\Omega^{'}))},
		\end{aligned}
	\end{equation}
	which together with triangle inequality and \eqref{b6} leads to \eqref{b7}.
\end{proof}
Next, we will state the Constantin-E-Titi type Commutator estimates as follows (see \cite{[YWL]} for torus).
\begin{lemma}	\label{lem2.3}
	Assume that $0<\alpha,\beta<1$, $1\leq p,q,p_{1},p_{2}\leq\infty$ and $\frac{1}{p}=\frac{1}{p_1}+\frac{1}{p_2}$, $\frac{1}{q}=\frac{1}{q_1}+\frac{1}{q_2}$.
	For any $\Omega^{'}\subset\subset\Omega$, then  we choose $\varepsilon$
	sufficiently small such that $\varepsilon$-neighbourhood of $\Omega^{'}$ being in $\Omega$, as $\varepsilon\to 0$, there holds that
	\begin{align} \label{cet}
		\|(fg)^{\varepsilon}- f^{\varepsilon}g^{\varepsilon}\|_{L^p(0,T;L^q(\Omega^{'}))} \leq \text{o}(\varepsilon^{\alpha+\beta}),	
	\end{align}
	provided one of the following three conditions holds
	\begin{enumerate}[(1)]
		\item  $f\in L^{p_1}(0,T;\underline{B}^{\alpha}_{q_{1},VMO} (\Omega))$, $g\in L^{p_2}(0,T;\underline{B}^{\beta}_{q_{2},VMO} (\Omega))$;
			\item  $f\in L^{p_1}(0,T;\underline{B}^{\alpha}_{q_{1},VMO} (\Omega))$, $g\in L^{p_2}(0,T;{B}^{\beta}_{q_{2},c(\mathbb{N})} (\Omega))$;
				\item  $f\in L^{p_1}(0,T;\underline{B}^{\alpha}_{q_{1},VMO} (\Omega))$, $g\in L^{p_2}(0,T;\dot{B}^{\beta}_{q_{2},\infty} (\Omega))$;
	
\end{enumerate}\end{lemma}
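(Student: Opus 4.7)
The plan is to apply the classical Constantin--E--Titi commutator decomposition
\begin{equation*}
(fg)^{\varepsilon} - f^{\varepsilon}g^{\varepsilon} = R_\varepsilon(f,g) - (f-f^{\varepsilon})(g-g^{\varepsilon}),
\qquad
R_\varepsilon(f,g)(x):=\int \varrho_\varepsilon(y)\,(\delta_y f)(x)\,(\delta_y g)(x)\,dy,
\end{equation*}
where $\delta_y h(x):=h(x-y)-h(x)$, and to bound each piece separately in the $L^{p}(0,T;L^{q}(\Omega'))$-norm. Throughout, $\varepsilon$ is taken smaller than $\mathrm{dist}(\Omega',\partial\Omega)$ so that all translations remain inside $\Omega$.

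For the easy piece $(f-f^{\varepsilon})(g-g^{\varepsilon})$, successive H\"older inequalities in $x$ (exponents $q_1,q_2$) and in $t$ (exponents $p_1,p_2$) give the bound $\|f-f^{\varepsilon}\|_{L^{p_1}(0,T;L^{q_1}(\Omega'))}\,\|g-g^{\varepsilon}\|_{L^{p_2}(0,T;L^{q_2}(\Omega'))}$. Lemma~\ref{lem2.1} makes the first factor $o(\varepsilon^{\alpha})$ in all three cases (the $f$-slot is always VMO), and the second factor $o(\varepsilon^{\beta})$ in cases (1)--(2) and $O(\varepsilon^{\beta})$ in case (3); the product is $o(\varepsilon^{\alpha+\beta})$ in every case.

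For the main piece $R_{\varepsilon}(f,g)$, Minkowski's integral inequality followed by H\"older in $x$ yields, pointwise in $t$,
\begin{equation*}
\|R_\varepsilon(f,g)(\cdot,t)\|_{L^{q}(\Omega')} \leq \int_{B_\varepsilon(0)} \varrho_\varepsilon(y)\,\|\delta_y f(\cdot,t)\|_{L^{q_1}(\Omega')}\,\|\delta_y g(\cdot,t)\|_{L^{q_2}(\Omega')}\,dy.
\end{equation*}
Writing $\varrho_\varepsilon = \varrho_\varepsilon^{1/q_1}\varrho_\varepsilon^{1/q_2}\varrho_\varepsilon^{1/q'}$ with $1/q'=1-1/q=1-1/q_1-1/q_2$ and invoking the three-factor H\"older inequality in $y$ against the probability measure $\varrho_\varepsilon\,dy$, together with Fubini, gives (up to absolute constants depending only on $\varrho$)
\begin{equation*}
\|R_\varepsilon(f,g)(\cdot,t)\|_{L^{q}(\Omega')} \leq C\,\Big(\int_{\Omega'}\fbxoo|\delta_y f|^{q_1}\,dy\,dx\Big)^{1/q_1}\Big(\int_{\Omega'}\fbxoo|\delta_y g|^{q_2}\,dy\,dx\Big)^{1/q_2}.
\end{equation*}
The definitions \eqref{2.3}--\eqref{2.5} identify the first factor as $o(\varepsilon^{\alpha})$ (since the $f$-slot is VMO in every case) and the second as $o(\varepsilon^{\beta})$ in cases (1)--(2) and $O(\varepsilon^{\beta})$ in case (3). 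A final H\"older in $t$ with exponents $p_1,p_2$ completes the bound.

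\textbf{Main obstacle.} The delicate step is promoting the pointwise-in-$t$ little-$o$ rate to the same rate after $L^{p_i}_{t}$ integration, because the Besov-VMO definition \eqref{2.5} is stated at each time. I would handle this by dominated convergence: the rescaled integrand $\varepsilon^{-\alpha}(\int_{\Omega'}\fbxoo|\delta_y f|^{q_1}dy\,dx)^{1/q_1}$ tends to $0$ a.e.\ in $t$ by the VMO hypothesis and is pointwise dominated by $2\|f(\cdot,t)\|_{L^{q_1}(\Omega')}\in L^{p_1}(0,T)$; the analogous envelope works for $g$ in cases (1)--(2), while in case (3) the envelope is the seminorm $\|g(\cdot,t)\|_{\dot B^{\beta}_{q_{2},\infty}(\Omega)}\in L^{p_2}(0,T)$ and contributes only the $O(\varepsilon^{\beta})$ factor, still sufficient since the $f$-factor carries the little-$o$.
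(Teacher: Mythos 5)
Your proposal follows essentially the same route as the paper's proof: the Constantin--E--Titi identity splitting the commutator into the double-increment integral plus $(f-f^{\varepsilon})(g-g^{\varepsilon})$, Minkowski/H\"older in $x$ with exponents $q_1,q_2$ to produce the two averaged increment factors, Lemma~\ref{lem2.1} for the second piece, and H\"older in $t$ with exponents $p_1,p_2$. For case (2) the paper explicitly proves the embedding $B^{\beta}_{q_2,c(\mathbb{N})}(\Omega)\hookrightarrow\underline{B}^{\beta}_{q_2,VMO}(\Omega)$ inside the proof and reduces to case (1); you use this implicitly, which is acceptable since it is recorded as \eqref{2.6}.

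The one place you go beyond the paper---which is silent on this point---is the passage from the pointwise-in-$t$ little-$o$ rate to the same rate after $L^{p_i}_t$ integration, and there your dominated-convergence argument contains an error for the VMO factor. The rescaled quantity $\varepsilon^{-\alpha}\bigl(\int_{\Omega'}\frac{1}{|B_\varepsilon(0)|}\int_{B_\varepsilon(0)}|\delta_yf|^{q_1}\,dy\,dx\bigr)^{1/q_1}$ is \emph{not} dominated by $2\|f(\cdot,t)\|_{L^{q_1}(\Omega')}$: that envelope controls only the unrescaled increment, and the factor $\varepsilon^{-\alpha}$ destroys any $\varepsilon$-uniform bound, so dominated convergence cannot be invoked with it. The correct $\varepsilon$-independent envelope is the Besov seminorm $\|f(\cdot,t)\|_{\dot B^{\alpha}_{q_1,\infty}(\Omega)}$, since $\varepsilon^{-\alpha}\le|y|^{-\alpha}$ for $|y|\le\varepsilon$ gives $\varepsilon^{-\alpha}\|\delta_yf\|_{L^{q_1}}\le\|f\|_{\dot B^{\alpha}_{q_1,\infty}}$---exactly the envelope you correctly use for $g$ in case (3). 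Using it for $f$ requires reading the hypothesis $f\in L^{p_1}(0,T;\underline{B}^{\alpha}_{q_1,VMO}(\Omega))$ as including $\|f(\cdot,t)\|_{B^{\alpha}_{q_1,\infty}(\Omega)}\in L^{p_1}(0,T)$, which is in fact how the paper implicitly interprets its own norm-like expression $\bigl\|\|f\|_{\underline{B}^{\alpha}_{q_1,VMO}}\|g\|_{\underline{B}^{\beta}_{q_2,VMO}}\bigr\|_{L^p(0,T)}$ in \eqref{2.19}. With that repair (and noting that \eqref{2.5} is stated with a $\liminf$, so the a.e.-in-$t$ convergence should be taken along the relevant sequence), your argument closes and matches the paper's; as written, that step fails.
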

\begin{remark}
	The estimate as $\int_0^T\int_{\Omega} | (fg)^{\varepsilon}- f^{\varepsilon}g^{\varepsilon}||\nabla h |dxdt$ frequently appears in the study of energy (helicity) conservation  of fluid  equations
	(see \cite{[CET],[WYY],[YWW],[CY],[De Rosa],[WY]}).
	Therefore, it seems that this lemma is very helpful in this research for the weak solutions in critical Besov spaces.
\end{remark}
\begin{remark}\label{rem2.3} We would like to point out that this result also holds true if the norm $\underline{B}^{\alpha}_{q_{1},VMO}$  for $f$ is replaced by norm ${B}^{\alpha}_{q_{1},c(\mathbb{N})}$. \end{remark}
\begin{remark}
	Taking into account \eqref{b8}, it is easy to check that the results in this lemma are also hold for $\|(f\times g)^\varepsilon-f^{\varepsilon}\times g^{\varepsilon}\|_{L^p(0,T;L^q(\Omega^{'}))}\leq C\text{o}(\varepsilon^{\alpha+\beta})$.
\end{remark}
\begin{proof}
	First, we recall the following   identity observed  by Constantin-E-Titi   in \cite{[CET]} that
	$$\ba&(fg)^{\varepsilon}(x)- f^{\varepsilon}g^{\varepsilon}(x)\\
	=&
	\int_{B_\varepsilon(0)}
	[f(x-y)-f(x)][g(x-y)-g(x)]\varrho_\varepsilon(y)dy-
	(f-f^{\varepsilon})(g-g^{\varepsilon})(x),
	\ea$$
	which together with Minkowski inequality shows that
	\begin{equation}\label{2.16}
		\ba
		&\|(fg)^{\varepsilon}-f^{\varepsilon}g^{\varepsilon}\|_{L^q(\Omega^{'})}\\
		\leq& \left\|\int_{B_\varepsilon(0)}
		[f(x-y)-f(x)][g(x-y)-g(x)]\varrho_\varepsilon(y)dy\right\|_{L^q(\Omega^{'})}+\|(f-f^{\varepsilon})(g-g^{\varepsilon})\|_{L^q(\Omega^{'})}\\
		= & I_1+I_2.
		\ea
	\end{equation}
	(1)First,	using the H\"older inequality, we can estimate $I_1$ as
	\begin{equation}\label{2.17}\begin{aligned}  I_1&=\left\|\int_{B_\varepsilon(0)}
		[f(x-y)-f(x)][g(x-y)-g(x)]\varrho_\varepsilon(y)dy\right\|_{L^q(\Omega^{'})}\\
		&\leq C\left\|\left(\fbxoo\ \ \ \ |f(x-y)-f(x)|^{q_1}dy\right)^{\f{1}{q_1}}\left(\fbxoo\ \ \ \ |g(x-y)-g(x)|^{q_2}dy\right)^{\f{1}{q_2}}\right\|_{L^q(\Omega^{'})}\\
		&\leq C\left(\int_{\Omega^{'}}\fbxoo\ \ \ \ |f(x-y)-f(x)|^{q_1}dydx\right)^{\f{1}{q_1}}\left(\int_{\Omega^{'}}\fbxoo\ \ \ \ |g(x-y)-g(x)|^{q_2}dydx\right)^{\f{1}{q_2}}\\
		&\leq C\varepsilon^\alpha\|f\|_{\underline{B}^\alpha_{q_1,VMO}(\Omega)}\varepsilon^\beta\|g\|_{\underline{B}^\beta_{q_2,VMO}(\Omega)},
			\end{aligned} \end{equation}
	where we require $\f{1}{q}=\f{1}{q_1}+\f{1}{q_2}$.
	
	For term $I_2$, by virtue of the H\"older inequality and Lemma \ref{lem2.1}, we can obtain
	\begin{equation}\label{2.18}
		\ba
		I_2&=\|(f-f^{\varepsilon})(g-g^{\varepsilon})\|_{L^q(\Omega^{'})}\\
		&\leq C\|f-f^{\varepsilon}\|_{L^{q_1}(\Omega^{'})}\|g-g^{\varepsilon}\|_{L^{q_2}(\Omega^{'})}\\
		&\leq C\varepsilon^\alpha\|f\|_{\underline{B}^\alpha_{q_1,VMO}(\Omega)}\varepsilon^\beta\|g\|_{\underline{B}^\beta_{q_2,VMO}(\Omega)}.
		\ea
	\end{equation}
Then substituing \eqref{2.17} and \eqref{2.18} into \eqref{2.16}, integrating the resultant inequality over time and using the H\"older inequality, we have
\begin{equation}\label{2.19}
	\ba
	&\|(fg)^{\varepsilon}-f^{\varepsilon}g^{\varepsilon}\|_{L^p(0,T;L^q(\Omega^{'}))}\\
	\leq& C\varepsilon^{\alpha+\beta}\left\|\|f\|_{\underline{B}^\alpha_{q_1,VMO}(\Omega)}\|g\|_{\underline{B}^\beta_{q_2,VMO}(\Omega)}\right\|_{L^p(0,T)}\\
	\leq& C\varepsilon^{\alpha+\beta}\|f\|_{L^{p_1}(0,T;\underline{B}^\alpha_{q_1,VMO}(\Omega))}\|g\|_{L^{p_2}(0,T;\underline{B}^\alpha_{q_2,VMO}(\Omega))}\\
	\leq &C\text{o}(\varepsilon^{\alpha+\beta}),
	\ \text{as}\ \varepsilon\to0,\ea
\end{equation}
	where we have used the definition of Besov-VMO type space \eqref{2.5} and $\f{1}{p}=\f{1}{p_1}+\f{1}{p_2}$.
	
	(2) In this case, it is sufficient to prove $B^\beta_{q_2,c(\mathbb{N})}(\Omega)\hookrightarrow \underline{B}^\beta_{q_{2},VMO}(\Omega)$ for any $0<\beta<1$ and $1\leq q_2\leq +\infty$. It should be noted that when $\beta=\f{1}{3}$ and $q_2=3$, it has been proved by Bardos-Gwiazda-\'Swierczewska Gwiazda-Titi-Wiedemann in \cite{[BGSTW]}, for the convenience of the readers and the integrity of paper, we give the details of proof for the general case. Suppose that $g\in B^\beta_{q_2,c(\mathbb{N})}(\Omega)$, then there exists a function $l(z)\rightarrow0$ as $z\rightarrow0$ such that
	\begin{equation}\label{2.20}
		\ba
		\f{1}{|y|^\beta}\left(\int_{\Omega}|g(x-y)-g(x)|^{q_2}dx\right)^{\f{1}{q_2}}\leq l(y),
		\ea
	\end{equation}
	which implies
	$$\f{1}{|y|^{q_2\beta}}\int_{\Omega}|g(x-y)-g(x)|^{q_2}dx\leq l(y)^{q_2}.$$
Then for any $|y|<\varepsilon$ with $0<\varepsilon<1$, we have
\begin{equation}\label{2.21}
	\ba
	\f{1}{\varepsilon^{q_2\beta}}\int_{\Omega}|g(x-y)-g(x)|^{q_2}dx\leq l(y)^{q_2}.
	\ea
\end{equation}	
	Integrating the above inequality with respect to $y$ on $B_\varepsilon(0)$, we have
	\begin{equation}
		\ba
		\f{1}{\varepsilon^{q_2\beta}}\fbxoo\ \ \  \int_{\Omega}|g(x-y)-g(x)|^{q_2}dxdy\leq \fbxoo\ \  l(y)^{q_2}dy,
		\ea
	\end{equation}
	which  gives
	$$\f{1}{\varepsilon^{\beta}}\left(\fbxoo\ \ \  \int_{\Omega}|g(x-y)-g(x)|^{q_2}dxdy\right)^{\f{1}{q_2}}\leq \left(\fbxoo\ \  l(y)^{q_2}dy\right)^{\f{1}{q_2}}.$$
	Let $\overline{l}(\varepsilon)=\left(\fbxoo\ \  \ \  l(y)^{q_2}dy\right)^{\f{1}{q_2}}$ and it is easy to check that $\overline{l}(\varepsilon)\rightarrow0$ as $\varepsilon\rightarrow0$. Then using Fubini's Theorem, we can get
	$$\lim_{l\rightarrow0}\f{1}{\varepsilon^{\beta}}\left( \int_{\Omega^{'}}\fbxoo\ \ \ |g(x-y)-g(x)|^{q_2}dydx\right)^{\f{1}{q_2}} =0,$$
	which together with $g\in L^{q_2}(\Omega)$ means $g\in \underline{B}^\beta_{q_2,VMO}(\Omega)$. Then we have concluded the desired result.
	
	(3) By the same manner of  the proof in part (1),	using the H\"older inequality and Minkowski inequality, we can estimate $I_1$ as
	\begin{equation}\label{2.22}\begin{aligned}  I_1&=\left\|\int_{B_\varepsilon(0)}
			[f(x-y)-f(x)][g(x-y)-g(x)]\varrho_\varepsilon(y)dy\right\|_{L^q(\Omega^{'})}\\
			&\leq C\left\|\left(\fbxoo\ \ \ \ |f(x-y)-f(x)|^{q_1}dy\right)^{\f{1}{q_1}}\left(\fbxoo\ \ \ \ |g(x-y)-g(x)|^{q_2}dy\right)^{\f{1}{q_2}}\right\|_{L^q(\Omega^{'})}\\
			&\leq C\left(\int_{\Omega^{'}}\fbxoo\ \ \ \ |f(x-y)-f(x)|^{q_1}dydx\right)^{\f{1}{q_1}}\left(\fbxoo\ \ \ \ \|g(x-y)-g(x)\|^{q_2}_{L^{q_2}(\Omega^{'})}dy\right)^{\f{1}{q_2}}\\
			&\leq C\varepsilon^\alpha\|f\|_{\underline{B}^\alpha_{q_1,VMO}(\Omega)}\varepsilon^\beta\|g\|_{\dot{B}^\beta_{q_2,\infty}(\Omega)},
	\end{aligned} \end{equation}
	where we require $\f{1}{q}=\f{1}{q_1}+\f{1}{q_2}$.
	
	For term $I_2$, taking into account the  H\"older inequality and Lemma \ref{lem2.1}, we can obtain
	\begin{equation}\label{2.24}
		\ba
		I_2&=\|(f-f^{\varepsilon})(g-g^{\varepsilon})\|_{L^q(\Omega^{'})}\\
		&\leq C\|f-f^{\varepsilon}\|_{L^{q_1}(\Omega^{'})}\|g-g^{\varepsilon}\|_{L^{q_2}(\Omega^{'})}\\
		&\leq C\varepsilon^\alpha\|f\|_{\underline{B}^\alpha_{q_1,VMO}(\Omega)}\varepsilon^\beta\|g\|_{\dot{B}^\beta_{q_2,\infty}(\Omega)}.
		\ea
	\end{equation}
Then following the same manner of \eqref{2.19} and using the definition of Besov-VMO space, as $\varepsilon\to 0$,  we have
$$\|(fg)^{\varepsilon}-f^{\varepsilon}g^{\varepsilon}\|_{L^p(0,t;L^q(\Omega^{'}))}\leq \text{o}(\varepsilon^{\alpha+\beta}).$$
	Then the proof of this lemma is completed.
\end{proof}
Next, we present Moser type estimate in term of function in Besov type spaces.
\begin{lemma}\label{lem2.5}
	Let $\alpha\in (0,1)$ and $p\in [1,\infty]$.
\begin{enumerate}
	\item[(1)] If $f\in B^\alpha_{\infty,\infty}(\Omega)$ and $g\in B^\alpha_{p,\infty}(\Omega)$, then it holds that

		\begin{equation}\label{2.25}
		\|fg\|_{{{B}^\alpha_{p,\infty}}(\Omega)}\leq C\left(\|f\|_{L^\infty(\Omega)}\|g\|_{{{B}^\alpha_{p,\infty}}(\Omega)}+\|f\|_{{{B}^\alpha_{\infty,\infty}}(\Omega)}\|g\|_{L^p(\Omega)}\right);
	\end{equation}
\item[(2)] If $f\in B^\alpha_{\infty,c(\mathbb{N})}(\Omega)$ and $g\in B^\alpha_{p,c(\mathbb{N})}(\Omega)$, then it holds that
	\begin{equation}\label{2.26}
		\|fg\|_{{{B}^\alpha_{p,c(\mathbb{N})}}(\Omega)}\leq C\left(\|f\|_{L^\infty(\Omega)}\|g\|_{{{B}^\alpha_{p,c(\mathbb{N})}}(\Omega)}+\|f\|_{{{B}^\alpha_{\infty,c(\mathbb{N})}}(\Omega)}\|g\|_{L^p(\Omega)}\right);
	\end{equation}
\item[(3)] If $f\in B^\alpha_{\infty,c(\mathbb{N})}(\Omega)$ and $g\in \underline{B}^\alpha_{p,VMO}(\Omega)$, then it holds that
	\begin{equation}\label{2.27}
	\|fg\|_{{\underline{B}^\alpha_{p,VMO}}(\Omega)}\leq C\left(\|f\|_{L^\infty(\Omega)}\|g\|_{{\underline{B}^\alpha_{p,VMO}}(\Omega)}+\|f\|_{{{B}^\alpha_{\infty,c(\mathbb{N})}}(\Omega)}\|g\|_{L^p(\Omega)}\right).
\end{equation}
\end{enumerate}
\end{lemma}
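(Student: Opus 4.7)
My plan for all three parts is to exploit the standard pointwise product-difference identity
\begin{equation*}
(fg)(x)-(fg)(x-y) = [f(x)-f(x-y)]\,g(x) + f(x-y)\,[g(x)-g(x-y)],
\end{equation*}
estimate the $L^p$ norm of each side via H\"older's inequality (placing the $f$-factors into $L^\infty$ and the $g$-factors into $L^p$), and then translate the resulting difference estimate into the appropriate target space. The trivial bound $\|fg\|_{L^p}\leq \|f\|_{L^\infty}\|g\|_{L^p}$ handles the $L^p$ part of each norm, so the task reduces to controlling the Besov-type semi-norm of the product.

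For part (1), taking the $L^p(\Omega\cap(\Omega+\{y\}))$-norm in $x$, applying H\"older, dividing by $|y|^\alpha$, and taking $\sup_{|y|>0}$ yields \eqref{2.25} directly from the definition \eqref{2.3}. For part (2), the same per-$y$ estimate gives
\begin{equation*}
\frac{\|(fg)(\cdot)-(fg)(\cdot-y)\|_{L^p}}{|y|^\alpha} \leq \|f\|_{L^\infty}\frac{\|g(\cdot)-g(\cdot-y)\|_{L^p}}{|y|^\alpha} + \|g\|_{L^p}\frac{\|f(\cdot)-f(\cdot-y)\|_{L^\infty}}{|y|^\alpha},
\end{equation*}
and each ratio on the right vanishes as $|y|\to 0$ by the definition \eqref{2.4} applied respectively to $g \in B^\alpha_{p,c(\mathbb{N})}$ and $f \in B^\alpha_{\infty,c(\mathbb{N})}$, whence $fg \in B^\alpha_{p,c(\mathbb{N})}$.

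For part (3), the VMO semi-norm in \eqref{2.5} calls for an average over $y \in B_\varepsilon(0)$. I raise the pointwise identity to the $p$-th power, integrate first in $x$ over $\Omega'$ and then average in $y$, obtaining
\begin{equation*}
\int_{\Omega'}\fbxoo |fg(x)-fg(x-y)|^p\,dy\,dx \leq C\sup_{|y|\leq\varepsilon}\|f(\cdot)-f(\cdot-y)\|_{L^\infty}^p\,\|g\|_{L^p}^p + C\|f\|_{L^\infty}^p \int_{\Omega'}\fbxoo |g(x)-g(x-y)|^p\,dy\,dx.
\end{equation*}
After dividing by $\varepsilon^{p\alpha}$ and taking the $p$-th root, the second summand tends to $0$ by the $\underline{B}^\alpha_{p,VMO}$ hypothesis on $g$; for the first, the bound $|y|\leq \varepsilon$ gives $\varepsilon^{-\alpha}\leq |y|^{-\alpha}$, so it is dominated by $\sup_{|y|\leq\varepsilon}\tfrac{\|f(\cdot)-f(\cdot-y)\|_{L^\infty}}{|y|^{\alpha}}\,\|g\|_{L^p}$, which vanishes by $f \in B^\alpha_{\infty,c(\mathbb{N})}$.

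The main obstacle I expect is the asymmetric setting of part (3): reconciling the supremum-type $c(\mathbb{N})$ condition on $f$ with the average-type $\underline{B}^\alpha_{p,VMO}$ condition on $g$ inside a common $y$-integral. The resolution is the elementary inequality $\varepsilon^{-\alpha}\leq |y|^{-\alpha}$ for $|y|\leq\varepsilon$, which lets one trade the average-normalization $\varepsilon^\alpha$ for the pointwise normalization $|y|^\alpha$ under the supremum in $y$, thereby exposing the $c(\mathbb{N})$-decay of the $L^\infty$-differences of $f$ in a form compatible with the VMO-average in $g$.
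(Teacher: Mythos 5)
Your proposal is correct and follows essentially the same route as the paper: the same pointwise product-difference decomposition with $f$-factors in $L^\infty$ and $g$-factors in $L^p$, the limit $|y|\to 0$ for the $c(\mathbb{N})$ case, and for part (3) the same trade of the average normalization $\varepsilon^{-\alpha}$ for the pointwise $|y|^{-\alpha}$ under the $y$-average to invoke the $c(\mathbb{N})$ decay of $f$. No substantive differences to report.
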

\begin{proof}(1) By the triangle inequality,
we have
	\begin{equation}\label{2.28}
		\begin{aligned}
	&	|f(x-y)g(x-y)-f(x)g(x)|\\
		\leq & {|f(x-y)\B(g(x-y)-g(x)\B)|+|\B(f(x-y)-f(x)\B)g(x)|},
		\end{aligned}
	\end{equation}
	which together with the Minkowski inequality implies
	\begin{equation}\label{2.29}
		\begin{aligned}
	&\frac{\|f(x-y)g(x-y)-f(x)g(x)\|_{L^p(\Omega)}}	{|y|^\alpha}\\
\leq	& C\left(\frac{\|f(x-y)\B(g(x-y)-g(x)\B)\|_{L^p(\Omega)}}{|y|^\alpha}+\frac{\|\B(f(x-y)-f(x)\B)g(x)\|_{L^p(\Omega)}}{|y|^\alpha}\right)\\
\leq &C\left(\|f\|_{L^\infty(\Omega)}\|\frac{\|g(x-y)-g(x)\|_{L^p(\Omega)}}{|y|^\alpha}+\frac{\|f(x-y)-f(x)\|_{L^\infty(\Omega)}}{|y|^\alpha}\|g\|_{L^p(\Omega)}\right)\\
\leq &C \left(\|f\|_{L^\infty(\Omega)}\|g\|_{{{B}^\alpha_{p,\infty}}(\Omega)}+\|f\|_{{{B}^\alpha_{\infty,\infty}}(\Omega)}\|g\|_{L^p(\Omega)}\right).
		\end{aligned}
	\end{equation}
Moreover, since
\begin{equation}\label{2.30}\|fg\|_{L^p(\Omega)}\leq C\|f\|_{L^\infty(\Omega)}\|g\|_{L^p(\Omega)}.\end{equation}
Hence, combining \eqref{2.29} and \eqref{2.30}, we can obtain the desired estimate \eqref{2.25}.

(2) Based on \eqref{2.29}, taking the limits  as $|y|\rightarrow 0$ and using the definition of Onsager's critical space \eqref{2.4}, we have
\begin{equation}\label{2.31}
	\begin{aligned}
		&\lim\limits_{|y|\rightarrow0}\frac{\|f(x-y)g(x-y)-f(x)g(x)\|_{L^p(\Omega)}}	{|y|^\alpha}\\
		\leq C&\left(\|f\|_{L^\infty(\Omega)}	\lim\limits_{|y|\rightarrow0}\|\frac{\|g(x-y)-g(x)\|_{L^p(\Omega)}}{|y|^\alpha}+	\lim\limits_{|y|\rightarrow0}\frac{\|f(x-y)-f(x)\|_{L^\infty(\Omega)}}{|y|^\alpha}\|g\|_{L^p(\Omega)}\right)\\
	\leq C	& \left(\|f\|_{L^\infty(\Omega)}\|g\|_{{{B}^\alpha_{p,c(\mathbb{N})}}(\Omega)}+\|f\|_{{{B}^\alpha_{\infty,c(\mathbb{N})}}(\Omega)}\|g\|_{L^p(\Omega)}\right)\to 0,
	\end{aligned}
\end{equation}
which together with \eqref{2.30} and the definition of Onsager's crititcal space $B^\alpha _{p,c(\mathbb{N})}$ \eqref{2.4} again gives \eqref{2.26}.

(3) Recall \eqref{2.28}, for any $\Omega^{'}\subset\subset\Omega$, in light of the Minkowski inequality, we have
\begin{equation}\label{2.32}
	\ba
	&\int_{\Omega^{'}}\fbxoo\ \ \ \ |f(x-y)g(x-y)-f(x)g(x)|^pdydx\\
	\leq C&\left(\int_{\Omega^{'}}\fbxoo\ \ \ \ |f(x-y)|^p|g(x-y)-g(x)|^pdydx+\int_{\Omega^{'}}\fbxoo\ \ \ \ |f(x-y)-f(x)|^p|g(x)|^pdydx\right)\\
	\leq C&\left(\|f\|^p_{L^\infty(\Omega)}\int_{\Omega^{'}}\fbxoo\ \ \ \ |g(x-y)-g(x)|^pdydx+\int_{\Omega^{'}}|g(x)|^pdx\fbxoo\ \ \ \ \|f(x-y)-f(x)\|_{L^\infty(\Omega)}^pdy\right)\\
	\leq C& \left(\|f\|^p_{L^\infty(\Omega)}\varepsilon^{p\alpha}\|g\|_{\underline{B}^\alpha_{p,VMO}(\Omega)}^p+\|g\|_{L^p(\Omega)}^p\fbxoo\ \ \ \ \varepsilon^{\alpha p}\left(\f{\|f(x-y)-f(x)\|_{L^\infty(\Omega)}}{|y|^\alpha}\right)^pdy\right)\\
	\leq C&\left(\|f\|^p_{L^\infty(\Omega)}\varepsilon^{p\alpha}\|g\|_{\underline{B}^\alpha_{p,VMO}(\Omega)}^p+\|g\|_{L^p(\Omega)}^p \varepsilon^{\alpha p}\|f\|_{{B}^\alpha_{\infty,c(\mathbb{N})}(\Omega)}^p\right).
	\ea
\end{equation}
Then letting $\varepsilon\rightarrow 0$ and using the difinition of Besov-VMO space \eqref{2.5}, we conclude the desired result \eqref{2.27}.
\end{proof}
\begin{lemma}(\cite{[NNT2020]})\label{lem2.6}
 Assume that there exists a positive constant $M>0$ such that
	\begin{equation}\label{2.33}
		\|\rho\|_{L^\infty(0,T;B^\alpha_{\infty,\infty}(\Omega))}\leq M.
	\end{equation}
	For any $\alpha \in (0,1)$ and $0<\varepsilon^\alpha <\frac{1}{6mM}$,  there holds

(i) If $\rho^{\varepsilon}(t,x)\geq \frac{1}{2m}$ then $\rho  >\frac{1}{3m}$. In addition, for any $y\in B_\varepsilon(0)$, $\rho (x-y,t)>\frac{1}{6m}$.

(ii) If $\rho^{\varepsilon}(t,x)\leq  \frac{1}{m}$ then $\rho  <\frac{7}{6m}$. In addition, for any $y\in B_\varepsilon(0)$, $\rho (x-y,t)<\frac{4}{3m}$.
\end{lemma}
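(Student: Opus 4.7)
The plan is to reduce everything to the pointwise closeness estimate $|\rho^{\varepsilon}(t,x)-\rho(t,x)|\leq M\varepsilon^{\alpha}$, which is an immediate consequence of the Besov bound $\|\rho\|_{L^\infty(0,T;B^\alpha_{\infty,\infty}(\Omega))}\leq M$. Concretely, the definition of the $B^{\alpha}_{\infty,\infty}(\Omega)$ semi-norm in \eqref{2.3} gives, for a.e.\ $t\in(0,T)$ and all $y$ with $|y|<\varepsilon$,
\begin{equation*}
\|\rho(t,\cdot-y)-\rho(t,\cdot)\|_{L^{\infty}(\Omega\cap(\Omega+\{y\}))}\leq M|y|^{\alpha}\leq M\varepsilon^{\alpha}.
\end{equation*}
Under the standing hypothesis $\varepsilon^{\alpha}<\tfrac{1}{6mM}$, this yields the key quantitative bound
\begin{equation*}
|\rho(t,x-y)-\rho(t,x)|<\tfrac{1}{6m}\quad\text{for all }y\in B_{\varepsilon}(0).
\end{equation*}

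From this I would conclude the mollified version by averaging against $\varrho_{\varepsilon}$. Since $\int_{B_{\varepsilon}(0)}\varrho_{\varepsilon}(y)\,dy=1$,
\begin{equation*}
|\rho^{\varepsilon}(t,x)-\rho(t,x)|\leq\int_{B_{\varepsilon}(0)}|\rho(t,x-y)-\rho(t,x)|\varrho_{\varepsilon}(y)\,dy<\tfrac{1}{6m}.
\end{equation*}

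Both conclusions then follow by direct arithmetic. For (i), if $\rho^{\varepsilon}(t,x)\geq\tfrac{1}{2m}$, then $\rho(t,x)\geq\rho^{\varepsilon}(t,x)-\tfrac{1}{6m}\geq\tfrac{1}{2m}-\tfrac{1}{6m}=\tfrac{1}{3m}$, and for any $y\in B_{\varepsilon}(0)$ the translation estimate gives $\rho(t,x-y)\geq\rho(t,x)-\tfrac{1}{6m}\geq\tfrac{1}{3m}-\tfrac{1}{6m}=\tfrac{1}{6m}$. For (ii), if $\rho^{\varepsilon}(t,x)\leq\tfrac{1}{m}$, then $\rho(t,x)\leq\rho^{\varepsilon}(t,x)+\tfrac{1}{6m}\leq\tfrac{7}{6m}$ and similarly $\rho(t,x-y)\leq\rho(t,x)+\tfrac{1}{6m}\leq\tfrac{4}{3m}$.

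There is essentially no obstacle here: the content of the lemma is just that a $B^{\alpha}_{\infty,\infty}$ function is $\alpha$-Hölder in a quantitative sense, so pointwise perturbations under mollification at scale $\varepsilon$ are controlled by $M\varepsilon^{\alpha}$, and the threshold $\varepsilon^{\alpha}<\tfrac{1}{6mM}$ is chosen precisely so that this perturbation is at most $\tfrac{1}{6m}$, which is the gap appearing between the numbers $\tfrac{1}{2m},\tfrac{1}{3m},\tfrac{1}{6m}$ (and between $\tfrac{1}{m},\tfrac{7}{6m},\tfrac{4}{3m}$). The only mild subtlety to flag is that for $x$ near $\partial\Omega$ the translated point $x-y$ may leave $\Omega$, but since the lemma will be applied to points where $\rho^{\varepsilon}(t,x)$ is defined, $x$ lies in $\Omega^{\varepsilon}$ and the $B^{\alpha}_{\infty,\infty}(\Omega)$ bound on $\Omega\cap(\Omega+\{y\})$ applies.
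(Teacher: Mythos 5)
Your proof is correct and follows essentially the same route as the paper: both derive the pointwise bound $|\rho(t,x-y)-\rho(t,x)|\leq M\varepsilon^{\alpha}<\tfrac{1}{6m}$ from the $B^{\alpha}_{\infty,\infty}$ hypothesis, transfer it to $|\rho^{\varepsilon}-\rho|$ by averaging against the mollifier, and conclude by arithmetic. The only cosmetic point is that the strict inequalities in the statement follow from the strict hypothesis $\varepsilon^{\alpha}<\tfrac{1}{6mM}$, so a couple of your $\geq$ signs should really be $>$.
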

\begin{proof}
The proof can be found in \cite{[NNT2020]}, for the convenience of readers and self-contained of our paper, we give the details.	From \eqref{2.33}, we have
	\begin{equation}\label{2.34}
		|\rho(t,x-y)-\rho(t,x)|\leq M|y|^\alpha, ~\text{for~a.e.}~x\in \Omega^{\varepsilon},y\in B_\varepsilon(0), t\in (0,T).
	\end{equation}
Since
\begin{equation}\label{2.35}
	\rho^{\varepsilon}(t,x)=\int _{B_\varepsilon(0)}\B(\rho(t,x-y)-\rho(t,x)\B)\varrho_\varepsilon(y)dy +\rho(t,x).
\end{equation}
If $\rho^{\varepsilon}\geq \frac{1}{2m}, $ from \eqref{2.35}, we can obtain
\begin{equation}\label{2.36}
	\rho(t,x)= \rho(t,x)^\varepsilon-\int _{B_\varepsilon(0)}\B(\rho(t,x-y)-\rho(t,x)\B)\varrho_\varepsilon(y)dy \geq \frac{1}{2m}-M\varepsilon^\alpha>\frac{1}{3m}.
\end{equation}
Then together with \eqref{2.34}, it yields
\begin{equation}
	\rho(t,x-y)>\rho(t,x)-M\varepsilon^\alpha>\frac{1}{6m}.
\end{equation}
Similarly, if $\rho^{\varepsilon}(t,x)\leq \frac{1}{m}$, by \eqref{2.35}, we have
\begin{equation}
	\rho(t,x)= \rho^{\varepsilon}(t,x)+\int _{B_\varepsilon(0)}\B(\rho(t,x-y)-\rho(t,x)\B)\varrho_\varepsilon(y)dy\leq \frac{1}{m}+M\varepsilon^\alpha<\frac{7}{6m},
\end{equation}
which together with \eqref{2.34} implies
\begin{equation}
	\rho(t,x-y)\leq \rho(t,x)+M\varepsilon^\alpha\leq \frac{4}{3m}.
\end{equation}
Then we have completed the proof of this lemma.
\end{proof}
Hence, Before we state our result, for the convenience of readers, we first give the definition of the weak solutions to compressible Euler equations \eqref{CEuler}.

\begin{definition}\label{ceulerdefi}
	A pair ($\rho,v$) is called a weak solution to  the   compressible Euler equations  \eqref{CEuler}  if ($\rho,v$) satisfies
	
	\begin{enumerate}[(i)]
		\item For any test function $\Phi\in C_0^\infty((0,t)\times \Omega)$, there holds
		\begin{equation}\label{2.43}\int^T_0\int_{\Omega}\rho(t,x)\partial_t \Phi(t,x)+ \rho(t,x)v(t,x)\nabla \Phi(t,x)dxdt=0\end{equation}
		\item
		For any  test vector field $\Psi\in C_{0}^{\infty}((0,t)\times\Omega)$, there holds
		\begin{equation}\label{2.44}
			\begin{aligned}
				\int_{0}^{T}\int_{\Omega}\rho(t,x)v(t,x)&\partial_{t}\Psi(t,x)+\B(\rho(t,x)v(t,x)\otimes v(t,x)\B) \nabla\Psi(t,x)\\
				+ &
				p(\rho )(t,x)\text{div} \Psi(t,x)dxdt=0.
		\end{aligned}	\end{equation}
		\item
		The energy inequality holds
		\begin{equation}\label{energyineq}
			\begin{aligned}
				\mathcal{E}(t)   \leq 	\mathcal{E}(0), 		\end{aligned}\end{equation}
		where $	\mathcal{E}(t)=\int_{\Omega}\B( \frac{1}{2}\rho |v|^2+G(\rho) \B) dx$, with $G(\rho)=\rho\int_{1}^\rho \f{p(s)}{s^2}ds$.
		
	\end{enumerate}
\end{definition}

\section{ Helicity  conservation for the compressible Euler equations}
This section is devoted to the proof of  the helicity conservation  of weak solutions for  compressible Euler equations with density containing vacuum in a bounded domain. We will divide the proof into two steps. First, we need the following proposition.
\begin{prop}\label{propo1}
	Let $(\rho,v)$ be a weak solution of compressible Euler equations \eqref{CEuler} in the sense of Definition \ref{ceulerdefi}. Assume that
	\begin{enumerate}[(i)]
		\item (Interior Besov regularity) if $(\rho,v)$ satisfy
		\begin{equation}\label{3.1}
			\ba
		&	0\leq \rho \leq c<\infty,~\rho \in L^3(0,T;B^{\f{1}{3}}_{3,c(\mathbb{N})}(\Omega))\cap  L^\infty(0,T;B^{\frac{1}{3}}_{\infty,\infty}(\Omega)), \\
		& v \in L^3(0,T;\underline{B}^{\frac{1}{3}}_{3,VMO}(\Omega))
		~ \text{and}~  \omega\in L^{\f{3}{2}}(0,T;L^{\f{3}{2}}(\Omega)) ,\ea\end{equation}
	\item (Near the vacuum)for any $\delta>0$, if
	\begin{equation}\label{3.3}
		\begin{aligned}
		&\rho^{-1}{\bf 1}_{\rho \leq \delta}\in L^{r_1}(0,T;L^{r_1}(\Omega)), {\bf 1}_{\rho \leq \delta}\partial_t \rho\in L^{r_2}(0,T;L^{r_2}(\Omega)),\text{and}~ {\bf 1}_{\rho \leq \delta}\nabla \rho\in L^{r_3}(0,T;L^{r_3}(\Omega)),
\end{aligned}	\end{equation}
	\end{enumerate}
	 	for some positive constant $c,r_1,r_2,r_3$ and some sufficiently small constant $\delta>0$ such that $\frac{1}{r_1}+\frac{1}{r_2}=\frac{2}{3}$ and $\frac{1}{r_1}+\frac{1}{r_3}=\frac{1}{3}$. Assume further that $p\in C^2[c_1,c_2]$, then $(\rho,v)$ is also a weak solution of the following system  in the sense of distributions on $(0,T)\times \Omega$
	 	\be\left\{\ba\label{rCEuler2}
	 	&\rho_t+\nabla \cdot (\rho v)=0, \\
	 	& v_{t} + v\cdot\nabla v+\nabla
	 	P(\rho )=0,
	 	\ea\right.\ee
	 	where  $P(\rho )= \int_{1}^{\rho}\f{p'(s)}{s}ds$.
	 	
\end{prop}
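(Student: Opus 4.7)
The plan is to show that the reformulated system holds by mollifying both the continuity and momentum equations, rewriting the resulting identity as an equation for $v^\varepsilon$ on the region where $\rho^\varepsilon$ is bounded away from $0$, and then passing to the limit in $\varepsilon$ and in the vacuum-cutoff parameter $m$.

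First I would mollify \eqref{CEuler} to obtain
\be\ba\notag
&\partial_t\rho^{\varepsilon}+\Div(\rho v)^{\varepsilon}=0,\\
&\partial_t(\rho v)^{\varepsilon}+\Div(\rho v\otimes v)^{\varepsilon}+\nabla p(\rho)^{\varepsilon}=0,
\ea\ee
valid pointwise on $\Omega^{\varepsilon}$. Using the formal smooth identity $\rho\,(\partial_tv+v\cdot\nabla v)=\partial_t(\rho v)+\Div(\rho v\otimes v)-v(\partial_t\rho+\Div(\rho v))$, I would combine the two mollified equations, divide by $\rho^{\varepsilon}$, and produce the intermediate identity
\be\ba\notag
\partial_tv^{\varepsilon}+v^{\varepsilon}\cdot\nabla v^{\varepsilon}+\nabla P(\rho)
= R^{\varepsilon}_1+R^{\varepsilon}_2+R^{\varepsilon}_3,
\ea\ee
where the $R^{\varepsilon}_i$ are the Constantin-E-Titi type commutators $(\rho v)^{\varepsilon}-\rho^{\varepsilon}v^{\varepsilon}$, $(\rho v\otimes v)^{\varepsilon}-\rho^{\varepsilon}v^{\varepsilon}\otimes v^{\varepsilon}$, and a pressure commutator expressing the difference between $\nabla p(\rho)^{\varepsilon}/\rho^{\varepsilon}$ and $\nabla P(\rho)^{\varepsilon}$ (using $P'(\rho)=p'(\rho)/\rho$ and $p\in C^2$ together with the Moser-type estimate in Lemma \ref{lem2.5}). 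To avoid dividing by a vanishing $\rho^{\varepsilon}$, I would test the identity against $\Psi\,\phi_m(\rho^{\varepsilon})$ for an arbitrary $\Psi\in C_c^\infty((0,T)\times\Omega)$, where $\phi_m$ is a smooth cutoff with $\phi_m\equiv 0$ on $\{s\le 1/(2m)\}$ and $\phi_m\equiv 1$ on $\{s\ge 1/m\}$; Lemma \ref{lem2.6} then guarantees $\rho\ge 1/(3m)$ on the support of $\phi_m(\rho^{\varepsilon})$ for $\varepsilon$ small, so $\rho^{\varepsilon}$ is uniformly bounded below there and the division is legitimate.

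Next I would pass to the limit $\varepsilon\to 0$. On the support of $\phi_m(\rho^{\varepsilon})$, Lemma \ref{lem2.3} applied with $f=\rho\in L^3(B^{1/3}_{3,c(\mathbb{N})})$, $g\in\{v,v\otimes v\}\in L^3(\underline{B}^{1/3}_{3,VMO})$ (using Lemma \ref{lem2.5} to handle $v\otimes v$) shows $R^{\varepsilon}_1,R^{\varepsilon}_2\to 0$ in the appropriate $L^p_tL^q_x$-norm; the pressure commutator is treated analogously thanks to $\rho\in L^\infty\cap B^{1/3}_{\infty,\infty}$. Convergence of the linear terms $\partial_tv^\varepsilon$, $v^\varepsilon\cdot\nabla v^\varepsilon$ tested against $\Psi\phi_m(\rho^\varepsilon)$ to the desired distributional expressions follows from the interior Besov regularity and Lemma \ref{lem2.2}. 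This yields the reformulated momentum equation multiplied by $\phi_m(\rho)$, tested against $\Psi$.

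Finally I would let $m\to\infty$ to remove the cutoff. On $\{\rho>0\}$ the cutoff $\phi_m(\rho)\to 1$, so the nonlinear and time-derivative terms converge by dominated convergence thanks to the $L^3$-integrability of $v,\omega$. The subtle terms are those produced when derivatives fall on $\phi_m(\rho)$: these are supported on $\{\rho\le 1/m\}$ and involve $\phi_m'(\rho^\varepsilon)\nabla\rho^\varepsilon$ or $\phi_m'(\rho^\varepsilon)\partial_t\rho^\varepsilon$, which are precisely controlled by the near-vacuum hypothesis \eqref{3.3}: the scaling $|\phi_m'|\lesssim m\lesssim\rho^{-1}\mathbf 1_{\rho\le 1/m}$ combined with H\"older in the exponents $r_1,r_2,r_3$ satisfying $\tfrac1{r_1}+\tfrac1{r_2}=\tfrac23$ and $\tfrac1{r_1}+\tfrac1{r_3}=\tfrac13$ makes all such boundary-of-vacuum terms vanish as $m\to\infty$. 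The main obstacle will be controlling precisely this interaction of the vacuum-cutoff derivative with the pressure and convective commutators — ensuring the chosen exponents of $R^\varepsilon_i$ after the commutator estimate match those dictated by \eqref{3.3} so that a uniform-in-$m$ bound and then $m\to\infty$ convergence both go through — and once this bookkeeping is in order, the limit yields \eqref{rCEuler2} in the sense of distributions.
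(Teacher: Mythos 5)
Your proposal follows essentially the same route as the paper's proof: mollify the system, isolate the Constantin--E--Titi commutators, insert the vacuum cutoff $\phi_m(\rho^\varepsilon)$ together with Lemma \ref{lem2.6} to make the division by $\rho^\varepsilon$ legitimate, send $\varepsilon\to0$ using Lemma \ref{lem2.3} to cancel the commutators against the $\varepsilon^{-2/3}$ growth of $\nabla v^\varepsilon$, and then send $m\to\infty$ using the near-vacuum integrability \eqref{3.3} with the exponent relations $\tfrac1{r_1}+\tfrac1{r_2}=\tfrac23$, $\tfrac1{r_1}+\tfrac1{r_3}=\tfrac13$ to remove the $\phi_m'$ terms. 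The only notable deviation is the pressure commutator, which the paper controls by a second-order Taylor expansion of $p$ combined with Jensen's inequality (using $p\in C^2$ and $\rho\in L^3(0,T;B^{1/3}_{3,c(\mathbb{N})}(\Omega))$) rather than by the Moser-type Lemma \ref{lem2.5} you invoke; this is a minor difference of technique and your outline otherwise matches the paper's argument.
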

\begin{proof}
	First, to control the possible vacuum, we introduce the following cut-off function $\phi_m(z) $ such that
	\begin{equation}\label{3.4}
		0\leq \phi_m(z)\in C^\infty([0,\infty)),~\phi_m(z)=1~\text{for\ any}~z> \frac{1}{m},~\phi_m(z)=0~\text{for\ any}~z < \frac{1}{2m}.		
	\end{equation}
It is easy to check that $\phi_m(z)\rightarrow1$  as $m\rightarrow+\infty$ pointwise in $\Omega $ and for any $z\in \Omega$, we have $z\nabla \phi_m(z)\leq C$. Then let $\varphi(t,x)\in C^\infty_0((0,T)\times \Omega)$ with the support of which is strictly contained in an open set $(t_1,t_2)\times \Omega^{'}\subset\subset(0,T)\times \Omega$, due to $(i)$ in definition \ref{ceulerdefi}, $\left(\varphi(t,x)\f{\phi_m(\rho^\varepsilon)}{\rho^\varepsilon}\right)^\varepsilon$ is not only compactly supported in both space and time, but also smooth in space and weakly Lipschitz continuous in time, which allows us to take it as a test function in the proof of this propositon.

Let $\Psi=\left(\varphi(t,x)\f{\phi_m(\rho^\varepsilon)}{\rho^\varepsilon}\right)^\varepsilon$ in \eqref{2.44} as the test function, one has
	\begin{eqnarray}\label{3.5}
	\int_0^T\int _{\Omega^{'}}\B(	(\rho v)^{\varepsilon}_t+\text{div}(\rho v\otimes v)^\varepsilon +\nabla p(\rho)^\varepsilon\B) \varphi(t,x)\f{\phi_m(\rho^\varepsilon)}{\rho^\varepsilon} dxdt=0.
	\end{eqnarray}
Then using appropriate commutators, the above equality can be rewritten as
	\begin{equation}\label{3.6}
		\begin{aligned}
			&\int_0^T\int_{\Omega^{'}}\B((\rho^{\varepsilon} v^{\varepsilon})_t +\text{div}((\rho v)^{\varepsilon}\otimes v^{\varepsilon})+\nabla p(\rho^{\varepsilon})\B)\varphi(t,x)\f{\phi_m(\rho^{\varepsilon})}{\rho^{\varepsilon}} dxdt\\
			=&-\int_0^T\int_{\Omega^{'}}\Big([(\rho v)^{\varepsilon}-(\rho^{\varepsilon} v^{\varepsilon})]_t+\text{div}[(\rho v\otimes v)^{\varepsilon}-(\rho v)^{\varepsilon} \otimes v^{\varepsilon}]\\
			&\ \ \ \ \ \ \ \ \ \ \ \ \ \ \ \ \ \ \ \ \ \ \ \ \ \ \ \ \ \ +\nabla [p(\rho)^{\varepsilon}-p(\rho^{\varepsilon})]\Big)\varphi(t,x)\f{\phi_m(\rho^{\varepsilon})}{\rho^{\varepsilon}} dxdt.
		\end{aligned}
	\end{equation}
	By direct computation and taking into account the  mollified version of the mass
	equation
	\begin{equation}\label{3.7}
		\rho^{\varepsilon}_t+\text{div}(\rho v)^{\varepsilon}=0,
	\end{equation}
	one has
	\begin{equation}\label{3.8}
		\begin{aligned}
		&\int_0^T\int_{\Omega^{'}}\B(	\rho^{\varepsilon} v^{\varepsilon}_t+\rho^{\varepsilon} v^{\varepsilon} \cdot \nabla v^{\varepsilon}+\nabla p(\rho^{\varepsilon})\B)\varphi(t,x)\f{\phi_m(\rho^{\varepsilon})}{\rho^{\varepsilon}} dxdt\\
			=&-\int_0^T\int_{\Omega^{'}}\Big([ (\rho v)^{\varepsilon}-\rho^{\varepsilon} v^{\varepsilon}] \cdot \nabla v^{\varepsilon}
			+[(\rho v)^{\varepsilon}-(\rho^{\varepsilon} v^{\varepsilon})]_t\\
			&+\text{div}[(\rho v\otimes v)^{\varepsilon}-(\rho v)^{\varepsilon} \otimes v^{\varepsilon}]+\nabla [p(\rho)^{\varepsilon}-p(\rho^{\varepsilon})]\Big)\varphi(t,x)\f{\phi_m(\rho^{\varepsilon})}{\rho^{\varepsilon}} dxdt.
		\end{aligned}
	\end{equation}
	Setting $P(\rho^{\varepsilon} )= \int_{1}^{\rho^{\varepsilon}}\f{p'(s)}{s}ds$, we arrive at
	\begin{equation}\label{3.9}
		\begin{aligned}
			&\int_0^T\int_{\Omega^{'}}\left(	v^{\varepsilon}_t+\omega^{\varepsilon} \times v^{\varepsilon}+\nabla\left( P(\rho^{\varepsilon})+\frac{1}{2}|v^{\varepsilon}|^2\right)\right)\varphi(t,x){\phi_m(\rho^{\varepsilon})} dxdt\\
			=&-\int_0^T\int_{\Omega^{'}}\Big(	[ (\rho v)^{\varepsilon}-\rho^{\varepsilon} v^{\varepsilon}] \cdot \nabla v^{\varepsilon}
		+[(\rho v)^{\varepsilon}-(\rho^{\varepsilon} v^{\varepsilon})]_t\\
			&+\text{div}[(\rho v\otimes v)^{\varepsilon}-(\rho v)^{\varepsilon} \otimes v^{\varepsilon}]
			+\nabla [p(\rho)^{\varepsilon}-p(\rho^{\varepsilon})]\Big)\varphi(t,x)\f{\phi_m(\rho^{\varepsilon})}{\rho^{\varepsilon}} dxdt\\
			=&R_1+R_2+R_3+R_4,
		\end{aligned}
	\end{equation}
	where we have used the indentity $v^{\varepsilon}\cdot \nabla v^{\varepsilon}=\frac{1}{2}\nabla |v^{\varepsilon}|^2 +\omega^{\varepsilon} \times v^{\varepsilon}$ with $\omega =\text{curl} v$.
	
We denote the terms on the LHS of \eqref{3.9} by $L_1-L_3$. Then, for the first term $L_1$, using the integration by parts, we have
\begin{equation}\label{3.10}
	\begin{aligned}
		L_1&=	\int_0^T\int_{\Omega^{'}} v^{\varepsilon}_t \varphi(t,x) \phi_m(\rho^{\varepsilon})dxdt\\
		&=-\int_0^T\int_{\Omega^{'}} v^{\varepsilon}\varphi(t,x)_t {\phi_m(\rho^{\varepsilon})}dxdt -\int_0^T\int_{\Omega^{'}} v^{\varepsilon} \varphi(t,x)\phi_m^{'}(\rho^{\varepsilon})\partial_t \rho^{\varepsilon}dxdt\\
		&=L_{11}+L_{12}.
	\end{aligned}
\end{equation}
To estimate $L_{12}$, by H\"older's inequality and the fact that $\rho^{\varepsilon} \phi^{'}_m(\rho^{\varepsilon})\leq C$, we have
\begin{equation}\label{3.11}
	\begin{aligned}
		|L_{12}|\leq& \int_0^T\int_{\Omega^{'}\cap \{\f{1}{2m}\leq \rho^{\varepsilon}\leq \f{1}{m}\}} |v^{\varepsilon}\varphi(t,x)   \frac{1}{\rho^{\varepsilon}} \partial_{t}\rho^{\varepsilon}| dxdt\\
		\leq& C\|v^{\varepsilon}\|_{L^3(0,T;L^3(\Omega^{'})}\|\frac{1}{\rho^{\varepsilon}}{\bf 1}_{\{\frac{1}{2m}\leq \rho^{\varepsilon}\leq \frac{1}{m}\}}\|_{L^{r_1}(0,T;L^{r_1}(\Omega^{'}))}\|\partial_t \rho^{\varepsilon}{\bf 1}_{\{\frac{1}{2m}\leq \rho^{\varepsilon}\leq \frac{1}{m}\}}\|_{L^{r_2}(0,T;L^{r_2}(\Omega^{'}))},
	\end{aligned}
\end{equation}
where $\f{1}{r_1}+\f{1}{r_2}=\f{2}{3}$.
Moreover, by virtue of the Minkowski inequality and Lemma \ref{lem2.6}, we can deduce that
\begin{equation}\label{3.12}
	\begin{aligned}
		&\|\partial_t \rho^{\varepsilon}{\bf 1}_{\{\frac{1}{2m}\leq \rho^{\varepsilon}\leq \frac{1}{m}\}}\|_{L^{r_2}(0,T;L^{r_2}(\Omega^{'}))}\\
		\leq& \|\int_{B_\varepsilon(0)}\partial_t \rho(x-y,t){\bf 1}_{\{\frac{1}{2m}\leq \rho^{\varepsilon}\leq \frac{1}{m}\}}\varrho_\varepsilon(y) dy\|_{L^{r_2}(0,T;L^{r_2}(\Omega^{'}))}\\
	\leq	& C\int_{B_\varepsilon(0)}\|\partial_t \rho(x-y){\bf 1}_{\{\frac{1}{6m}\leq \rho\leq \frac{4}{3m}\}}\|_{L^{r_2}(0,T;L^{r_2})(\Omega^{'})}\varrho_\varepsilon(y) dy\\
		\leq& C\|\partial_t \rho(x-y){\bf 1}_{\{ \rho\leq \delta\}}\|_{L^{r_2}(0,T;L^{r_2}(\Omega))}.
\end{aligned}\end{equation}
Similarly, we also have
\begin{equation}\label{3.13}
	\begin{aligned}
		&\|\frac{1}{\rho^{\varepsilon}}{\bf 1}_{\{\frac{1}{2m}\leq \rho^{\varepsilon}\leq \frac{1}{m}\}}\|_{L^{r_1}(0,T;L^{r_1})(\Omega^{'})}\\
		\leq& C\left(\|(\f{1}{\rho}-\f{1}{\rho^\varepsilon}){\bf 1}_{\{\frac{1}{2m}\leq \rho^{\varepsilon}\leq \frac{1}{m}\}}\|_{L^{r_1}(0,T;L^{r_1}(\Omega^{'}))}+ \|\frac{1}{\rho}{\bf 1}_{\{\frac{1}{3m}\leq \rho\leq \frac{7}{6m}\}}\|_{L^{r_1}(0,T;L^{r_1}(\Omega))}\right)\\
		\leq& C\left(m^2\|(\rho^\varepsilon-\rho)\|_{L^{r_1}(0,T;L^{r_1}(\Omega^{'}))}+\|\frac{1}{\rho}{\bf 1}_{\rho\leq \delta}\|_{L^{r_1}(0,T;L^{r_1}(\Omega))}\right).
	\end{aligned}
\end{equation}
Thus plugging \eqref{3.12} and \eqref{3.13} into \eqref{3.11} and using the Lebesgue dominated convergence theorem, we have
\begin{equation}\label{3.14}
	\lim\limits_{m\rightarrow +\infty}\lim\limits_{\varepsilon\rightarrow 0}|L_{12}|=0~\text{and}~ \lim\limits_{m\rightarrow +\infty}\lim\limits_{\varepsilon\rightarrow 0}L_{11}=-\int_0^T\int_{\Omega} v\varphi(t,x)_t dxdt,
\end{equation}
which implies
\begin{equation}\label{3.15}
	\lim\limits_{m\rightarrow +\infty}\lim\limits_{\varepsilon\rightarrow 0}L_1=-\int_0^T\int_{\Omega} v\varphi(t,x)_t dxdt.
\end{equation}
For the second term $L_2$ on the LHS of \eqref{3.9}, using the Lebesgue domainated convergence theorem, we have
\begin{equation}\label{3.16}
	\lim\limits_{m\rightarrow +\infty}\lim\limits_{\varepsilon\rightarrow 0}	L_2=\int_0^T\int_\Omega \omega\times v \varphi(t,x) dxdt.
\end{equation}
By integration by parts, the third term $L_3$ on the LHS of \eqref{3.9} can be handled as
\begin{equation}\label{3.17}
	\begin{aligned}
		L_3&=-\int_0^T\int_{\Omega^{'}} \left( P(\rho^{\varepsilon})+\frac{1}{2}|v^{\varepsilon}|^2\right)\left(\phi_m(\rho^{\varepsilon})\text{div}\varphi(t,x)  +\varphi(t,x) \cdot\phi_m^{'}(\rho^{\varepsilon})\nabla \rho^{\varepsilon} \right)dxdt\\
		&=L_{31}+L_{32}.
	\end{aligned}
\end{equation}
Similar as \eqref{3.10}, with $\f{1}{r_1}+\f{1}{r_3}=\f{1}{3}$, we can obtain
\begin{equation}\label{3.18}
	\begin{aligned}
&|L_{32}|\\\le&q 	|-\int_0^T\int_{\Omega^{'}} \left( P(\rho^{\varepsilon})+\frac{1}{2}|v^{\varepsilon}|^2\right)\varphi(t,x) \cdot\phi_m^{'}(\rho^{\varepsilon})\nabla \rho^{\varepsilon} dxdt|\\
=&	|	\int_0^T\int_{\Omega^{'}} \left( P(\rho^{\varepsilon})+\frac{1}{2}|v^{\varepsilon}|^2\right)\rho^{\varepsilon}\phi_m^{'}(\rho^{\varepsilon})\frac{1}{\rho^{\varepsilon}}{\bf 1}_{\{\frac{1}{2m}\leq \rho^{\varepsilon}\leq \frac{1}{m}\}}\varphi(t,x) \cdot\nabla \rho^{\varepsilon}{\bf 1}_{\{\frac{1}{2m}\leq \rho^{\varepsilon}\leq \frac{1}{m}\}} dxdt|\\
\leq& C\| P(\rho^{\varepsilon})+\frac{1}{2}|v^{\varepsilon}|^2\|_{L^{\frac{3}{2}}(0,T;L^{\frac{3}{2}}(\Omega^{'}))}\|\frac{1}{\rho^{\varepsilon}}{\bf 1}_{\{\frac{1}{2m}\leq \rho^{\varepsilon}\leq \frac{1}{m}\}}\|_{L^{r_1}(0,T;L^{r_1}(\Omega^{'}))}\|\nabla \rho^{\varepsilon}{\bf 1}_{\{\frac{1}{2m}\leq \rho^{\varepsilon}\leq \frac{1}{m}\}}\|_{L^{r_3}(0,T;L^{r_3}(\Omega^{'}))}\\
		\leq& C\| P(\rho)+\frac{1}{2}|v|^2\|_{L^{\frac{3}{2}}(0,T;L^{\frac{3}{2}}(\Omega))}\|\frac{1}{\rho}{\bf 1}_{\{\rho\leq \delta\}}\|_{L^{r_1}(0,T;L^{r_1}(\Omega))}\|\nabla \rho{\bf 1}_{\{\rho\leq \delta\}}\|_{L^{r_3}(0,T;L^{r_3}(\Omega))},
	\end{aligned}
\end{equation}
which together with the  Lebesgue dominated convergence theorem gives
\begin{equation}\label{3.20}
	\ba
\lim\limits_{m\rightarrow +\infty}\lim\limits_{\varepsilon\rightarrow 0}	L_{31}=	-\int_0^T\int_{\Omega} \left( P(\rho)+\frac{1}{2}|v|^2\right)\text{div}\varphi(t,x) dxdt,\
	\lim\limits_{m\rightarrow +\infty}\lim\limits_{\varepsilon\rightarrow 0}	L_{32}=0.
	\ea
\end{equation}
In conclusion, we have
\begin{equation}\label{3.21}
	\lim\limits_{m\rightarrow +\infty}\lim\limits_{\varepsilon\rightarrow 0} LHS=\int_0^T\int_{\Omega} -v\varphi(t,x)_t+ (\omega\times v )\varphi(t,x) -\left( P(\rho)+\frac{1}{2}|v|^2\right) \text{div}\varphi(t,x) dxdt .
\end{equation}
Next, we are ready to estimate the terms $R_1-R_4$ on the RHS of \eqref{3.9}. For $R_1$, using the H\"older inequality, Lemma \ref{lem2.2} and  Lemma \ref{lem2.3}, we see that
\begin{equation}\label{3.22}
	\begin{aligned}
		|R_1|&=\left|-\int_0^T\int_{\Omega^{'}}	[ (\rho v)^{\varepsilon}-\rho^{\varepsilon} v^{\varepsilon}] \cdot \nabla v^{\varepsilon} \varphi(t,x)\f{\phi_m(\rho^{\varepsilon})}{\rho^{\varepsilon}} dxdt\right|\\
			&\leq mC\|\varphi\|_{L^3(0,T;L^3(\Omega^{'}))}\|(\rho v)^{\varepsilon}-\rho^{\varepsilon} v^{\varepsilon}\|_{L^3(0,T;L^3(\Omega^{'}))}\|\nabla v^{\varepsilon}\|_{L^3(0,T;L^3(\Omega^{'}))}\\
		&\leq mC\varepsilon^{\f{2}{3}}\|\rho\|_{L^\infty(0,T;B^{\frac{1}{3}}_{\infty,\infty}(\Omega))}\|v\|_{L^3(0,T;\underline{B}^{\frac{1}{3}}_{3,VMO}(\Omega))}\varepsilon^{-\f{2}{3}}\|v\|_{L^3(0,T;\underline{B}^{\frac{1}{3}}_{3,VMO}(\Omega))}\\
		&\leq mC\|\rho\|_{L^\infty(0,T;B^{\frac{1}{3}}_{\infty,\infty}(\Omega))}\|v\|_{L^3(0,T;\underline{B}^{\frac{1}{3}}_{3,VMO}(\Omega))}^2.
	\end{aligned}
\end{equation}
This, after taking the limits as $\varepsilon\rightarrow 0$ firstly and $m\rightarrow +\infty$ secondly and using the definition of  Besov-VMO type space $\underline{B}^\alpha_{p,VMO}$ \eqref{2.5}, we have
\begin{equation}\label{3.23}
	\lim\limits_{m\rightarrow +\infty}\lim\limits_{\varepsilon\rightarrow 0} R_1=0.
\end{equation}
For $R_2$, using the integration by parts,  the mass equation, Lemma \ref{lem2.2} and Lemma \ref{lem2.3}, one has
\begin{equation}\label{3.24}
	\begin{aligned}
		&|R_2|\\
		=&\left|	\int_0^T\int_{\Omega^{'}}[(\rho v)^{\varepsilon}-(\rho^{\varepsilon} v^{\varepsilon})] \left(\varphi(t,x)_t\frac{\phi_m(\rho^{\varepsilon})}{\rho^{\varepsilon}}+\varphi(t,x) \f{\phi_m^{'}(\rho^{\varepsilon})\partial_t \rho^{\varepsilon}}{\rho^{\varepsilon}}-\varphi(t,x) \phi_m(\rho^{\varepsilon})\frac{\partial_t \rho^{\varepsilon}}{(\rho^{\varepsilon})^2}\right)dxdt\right |\\
		=&\left|	\int_0^T\int_{\Omega^{'}}[(\rho v)^{\varepsilon}-(\rho^{\varepsilon} v^{\varepsilon})] \left(\varphi(t,x)_t\frac{\phi_m(\rho^{\varepsilon})}{\rho^{\varepsilon}}-\varphi(t,x) \rho^{\varepsilon}\phi_m^{'}(\rho^{\varepsilon})\frac{\text{div}(\rho v)^{\varepsilon}}{(\rho^{\varepsilon})^2}+\varphi(t,x) \phi_m(\rho^{\varepsilon})\frac{\text{div} (\rho v)^{\varepsilon}}{(\rho^{\varepsilon})^2}\right) dxdt \right|\\
		\leq& C\|(\rho v)^{\varepsilon}-\rho^{\varepsilon} v^{\varepsilon}\|_{L^3(0,T;L^3(\Omega^{'}))}\left(m\|\varphi_t\|_{L^{\frac{3}{2}}(0,T;L^{\frac{3}{2}}(\Omega^{'}))}+m^2\|\varphi\|_{L^3(0,T;L^3(\Omega^{'}))}\|\nabla (\rho v)^{\varepsilon}\|_{L^3(0,T;L^3(\Omega^{'}))}\right)\\
		\leq& C\varepsilon^{{\frac{2}{3}}}\|\rho\|_{L^\infty(0,T;B^{\frac{1}{3}}_{\infty,\infty}(\Omega))}\|v\|_{L^3(0,T;\underline{B}^{\frac{1}{3}}_{3,VMO}(\Omega))}\\
		&\left(m+m^2 \varepsilon^{-{\frac{2}{3}}}\left(\|\rho\|_{L^\infty(0,T;L^\infty(\Omega))}\|v\|_{L^3(0,T;\underline{B}^{\frac{1}{3}}_{3,VMO}(\Omega))}+\|\rho\|_{L^\infty(B^{\frac{1}{3}}_{\infty,\infty}(\Omega))}\|v\|_{L^3(0,T;L^3(\Omega))}\right)\right),
	\end{aligned}
\end{equation}
which follows from the definitions of  Besov-VMO type space \eqref{2.5} that
\begin{equation}\label{3.25}
	\lim\limits_{m\rightarrow +\infty}\lim\limits_{\varepsilon\rightarrow 0} R_2=0.
\end{equation}
Follow the same manner of derivation of $R_2$, one can infer that
\begin{equation}\label{3.26}
	\begin{aligned}
		&|R_3|\\
		=&\big|	\int_0^T\int_{\Omega^{'}}[(\rho v\otimes v)^{\varepsilon}-(\rho v)^{\varepsilon}\otimes v^{\varepsilon}]\B(\nabla \varphi(t,x)\frac{\phi_m(\rho^{\varepsilon})}{\rho^{\varepsilon}} +\varphi(t,x)\frac{\rho^{\varepsilon}\phi_m^{'}(\rho^{\varepsilon})\nabla \rho^{\varepsilon}-\phi_m(\rho^{\varepsilon})\nabla \rho^{\varepsilon}}{(\rho^{\varepsilon})^2}\B)dxdt\Big|\\
		\leq &C\|(\rho v\otimes v)^{\varepsilon}-(\rho v)^{\varepsilon} \otimes v^{\varepsilon}\|_{L^{\frac{3}{2}}(0,T;L^{\frac{3}{2}}(\Omega^{'}))}\Big(m\|\nabla \varphi\|_{L^3(0,T;L^3(\Omega^{'}))}+m^2\|\nabla \rho^{\varepsilon}\|_{L^\infty(0,T;L^\infty(\Omega^{'}))}\|\varphi\|_{L^3(0,T;L^3(\Omega^{'}))}\Big)\\
		\leq &C \varepsilon^{{\frac{2}{3}} }\left(\|\rho\|_{L^\infty(0,T;L^\infty(\Omega))}\|v\|_{L^3(0,T;\underline{B}^\alpha_{3,VMO}(\Omega))}+\|\rho\|_{L^\infty(0,T;B^{\frac{1}{3}}_{\infty,\infty}(\Omega))}\|v\|_{L^3(0,T;L^3(\Omega))}\right)\\
		&\ \ \  \|v\|_{L^3(0,T;\underline{B}^{\f{1}{3}}_{3,VMO}(\Omega))}\left(m+m^2\varepsilon^{-{\frac{2}{3}}}\|\rho\|_{L^\infty(0,T;B^{\frac{1}{3}}_{\infty,\infty}(\Omega))}\right)\\
	\end{aligned}
\end{equation}
After taking the limits as $\varepsilon\rightarrow 0$ and $m\rightarrow +\infty$ and using the definitions of  Besov-VMO type space \eqref{2.5}, we can obtain
\begin{equation}\label{3.27}
	\lim\limits_{m\rightarrow +\infty}\lim\limits_{\varepsilon\rightarrow 0} R_3=0.
\end{equation}
	For the last term $R_4$, due to the Taylor's expansion  that, for any $ s_0\geq \f{1}{3m}$, $s\leq \|\rho\|_{L^\infty(0,T;L^\infty)}$,  there holds
$$|p(s)-p(s_0)-p^{'}(s_0)(s-s_0)|\leq C(s-s_0)^2,$$
where the constant $C=\sup\{p^{''}(\xi)|:\f{1}{2m}\leq \xi\leq \|\rho\|_{L^\infty(0,T;L^\infty)}\}$ .
Therefore
\begin{equation}\label{3.28}
	|p(\rho^{\varepsilon}(t,x))-p(\rho(t,x))-p^{'}(\rho (t,x))(\rho^{\varepsilon}(t,x)-\rho(t,x))|\leq C|\rho^{\varepsilon}(t,x)-\rho(t,x)|^2,
\end{equation}
and similarly, we also have
\begin{equation}\label{3.29}
	|p(\rho(t,y))-p(\rho(t,x))-p^{'}(\rho (t,x))(\rho (t,y)-\rho (t,x))|\leq C|\rho (t,y)-\rho (t,x)|^2.
\end{equation}
Applying convolution with respect to $y$ to the last inequality and invoking Jensen's inequality, we can obtain
\begin{equation}\label{3.30}
	\begin{aligned}
		|p^\varepsilon(\rho(t,x))-p(\rho(t,x))-p^{'}(\rho (t,x))(\rho^{\varepsilon} 	 (t,x)-\rho (t,x))|\leq C(\rho (t,\cdot)-\rho (t,x))^2*\varrho_\varepsilon,
	\end{aligned}
\end{equation}
which together with \eqref{3.28} gives
\begin{equation}\label{pi}
	\begin{aligned}
		|p(\rho^{\varepsilon}(t,x))-p^\varepsilon(\rho(t,x))|&\leq C|\rho^{\varepsilon}(t,x)-\rho(t,x)|^2+C(\rho (t,\cdot)-\rho (t,x))^2*\varrho_\varepsilon.
	\end{aligned}
\end{equation}
Then after the integration by parts, $R_4$ can be handlded as
\begin{equation}\label{3.32}
	\begin{aligned}
		R_4&=\int_0^T\int_{\Omega^{'}} [p(\rho^{\varepsilon})-p^\varepsilon(\rho)]\left(\frac{\phi_m(\rho^{\varepsilon})}{\rho^{\varepsilon}}\text{div} \varphi(t,x)-\varphi(t,x)\cdot \frac{\phi_m(\rho^{\varepsilon})\nabla \rho^{\varepsilon}}{(\rho^{\varepsilon})^2}\right)dxdt\\
		&\leq C\|\rho^{\varepsilon}(t,x)-\rho(t,x)|^2+(\rho (t,\cdot)-\rho (t,x))^2*\varrho_\varepsilon\|_{L^{\frac{3}{2}}(0,T;L^{\frac{3}{2}}(\Omega^{'}))}\\
		&\ \ \left(m\|\nabla \varphi\|_{L^3(0,T;L^3(\Omega^{'}))}+m^2\|\nabla \rho^{\varepsilon}\|_{L^3(0,T;L^3(\Omega^{'}))}\right)\\
		&\leq C \varepsilon^{\frac{2}{3}}\|\rho\|_{L^3(0,T;{B^{\frac{1}{3}}_{3,c(\mathbb{N})}}(\Omega))}^2\left(m+m^2\varepsilon^{-\f{2}{3}}\|\rho\|_{L^3(0,T;{B^{\frac{1}{3}}_{3,c(\mathbb{N})}}(\Omega))}\right).
	\end{aligned}
\end{equation}
Then, taking into account the definition of Onsager's critical space \eqref{2.4}, one has
\begin{equation}\label{3.33}
\lim\limits_{m\rightarrow +\infty}\lim\limits_{\varepsilon\rightarrow 0} R_4=0.
\end{equation}
Collecting \eqref{3.21}, \eqref{3.23},   \eqref{3.25}, \eqref{3.27} and \eqref{3.33},  for any $\varphi(t,x)\in C_0^\infty((0,T)\times \Omega)$,  we have
\begin{equation}\label{3.34}
	\int_0^T \int_\Omega -v \partial_t \varphi(t,x)+(\omega \times v )\varphi(t,x)-\left( P(\rho)+\frac{1}{2}|v|^2\right) \text{div} \varphi(t,x)  dxdt=0,
\end{equation}
which means $(\rho,v)$ is also a weak solution of equations
\begin{equation}\label{3.35}\left\{\ba
	&\rho_t+\nabla \cdot (\rho v)=0, \\
	& v_t+\omega \times v+\nabla\left( P(\rho)+\frac{1}{2}|v|^2\right)=0.
	\ea\right.\end{equation}

	Then we have completed the proof of this proposition.
\end{proof}
Based on the above proposition, we are in a position to prove the main result.
\begin{proof}[Proof of Theorem \ref{the1}] To prove the helicity conservation of compressible Euler equations in a bounded domain, we will divide our proof into two steps. First, we will prove a local version helicity equality for compressible Euler equations, namely, for any smooth function $\phi(x)\in C_0^\infty (\Omega)$ with its compact support strictly contained in $\Omega$, i.e. $supp(\phi(x))=\Omega^{'}\subset\subset\Omega$, there holds
	\begin{equation}\label{localhelicity}
		\begin{aligned}
			&\int_{\Omega}(v\cdot\omega)(T,x)\phi(x)dx-\int_{\Omega}(v\cdot\omega)(0,x)\phi(x)dx\\
		-&\int_0^T\int_{\Omega}	\left(P(\rho )-\f12|v|^{2}\right)\omega\cdot \nabla \phi(x)+\left(\omega\cdot v\right)\B(v\cdot\nabla \phi(x)\B)dxdt=0.
\end{aligned}	\end{equation}
Then, by using some suitable cut-off function to capture the effect of the boundary, we extend the local helicity balance to be a global one.

\underline{Step 1}:
	Let $(\rho,v)$ be a weak solution to the compressible Euler equations \eqref{CEuler}, in combination with \eqref{a1}, \eqref{a4} and Proposition \ref{propo1}, we know that $(\rho,v)$ is also a weak solution to the system \eqref{3.35}. Then one can  mollify the equation \eqref{3.35} in spatial direction to deduce that
	\begin{equation}\label{3.36}\left\{\begin{aligned}
		&v^{\varepsilon}_{t} +(\omega\times v)^{\varepsilon}+\nabla(P(\rho )+\f12|v|^{2})^{\varepsilon}= 0,\\
		&\omega^{\varepsilon}_{t}-\text{curl}( v\times\omega)^{\varepsilon}=0,\\
		& \text{div}\omega^{\varepsilon} =0.
	\end{aligned}\right.\end{equation}
	Consequently, for any scalar function $\phi(x)\in C_0^\infty(\Omega)$ with its compact support strictly contained in $\Omega$, i.e. $supp(\phi(x))=\Omega^{'}\subset\subset\Omega$, by a straightforward computation, we have
	\begin{equation}\label{3.37}\ba
	&\f{d}{dt}\int_{\Omega^{'}}(v^{\varepsilon}\cdot\omega^{\varepsilon})\phi(x)dx
	=\int_{\Omega^{'}}\left( v_{t}^{\varepsilon}\cdot
	\omega^{\varepsilon}+\omega_{t}^{\varepsilon}\cdot v^{\varepsilon}\right)\phi(x)dx\\
	=&\int_{\Omega^{'}}-\left((\omega\times v)^{\varepsilon}+\nabla(P(\rho )+\f12|v|^{2})^{\varepsilon}\right)\cdot \omega^{\varepsilon}\phi(x)+
	\B(\text{curl}\left( v\times\omega\right)\B)^{\varepsilon}\cdot v^{\varepsilon}\phi(x)dx\\
	=&\int_{\Omega^{'}}-\left(\omega\times v\right)^{\varepsilon}\cdot \omega^{\varepsilon}\phi(x)+\B(P(\rho )+\f12|v|^{2}\B)^{\varepsilon}\text{div}\B(\omega^{\varepsilon} \phi(x)\B)+
	\left(   v\times\omega \right)^{\varepsilon}\cdot\text{curl}\B(v^{\varepsilon}\phi(x)\B)dx
	\\
	=&\int_{\Omega^{'}}-\left(\omega\times v\right)^{\varepsilon}\cdot \omega^{\varepsilon}\phi(x)+\left(P(\rho )+\f12|v|^{2}\right)^{\varepsilon}\omega^{\varepsilon}\cdot \nabla\phi(x)+
	\left(   v\times\omega \right)^{\varepsilon}\cdot\B(\omega^{\varepsilon}\phi(x)-\nabla\phi(x)\times v^{\varepsilon}\B)dx
	\\
	=&\int_{\Omega^{'}}-2\Big((\omega\times v)^{\varepsilon} -\omega^{\varepsilon}\times v^{\varepsilon} \Big)\cdot\omega^{\varepsilon}\phi(x)+\left(P(\rho )+\f12|v|^{2}\right)^{\varepsilon}\omega^{\varepsilon}\cdot \nabla\phi(x)\\
	&-\B( \left(  v\times\omega \right)^{\varepsilon}-v^{\varepsilon}\times \omega^{\varepsilon}\B)\cdot\B(\nabla\phi(x)\times v^{\varepsilon}\B)-\left(v^{\varepsilon}\times \omega^{\varepsilon}\right)\cdot\B(\nabla\phi(x)\times v^{\varepsilon}\B) dx,
	\ea\end{equation}
	where we have used the following identities
	 $$\omega^{\varepsilon}\times v^{\varepsilon}\cdot \omega^{\varepsilon}=\omega^{\varepsilon}\times\omega ^\varepsilon\cdot v^{\varepsilon}=0,\ \text{div}\omega^{\varepsilon}=0,$$
	 and
	 $$\text{curl}\B(v^{\varepsilon}\phi(x)\B)=\omega^{\varepsilon}\phi(x)-\nabla\phi(x)\times v^{\varepsilon}.$$
	Then integrating \eqref{3.37} over $[0,T]$, we have
	\begin{equation}\label{3.38}
		\ba
	&\int_{\Omega^{'}}(v^{\varepsilon}\cdot\omega^{\varepsilon})(T,x)\phi(x)dx-\int_{\Omega^{'}}(v^{\varepsilon}\cdot\omega^{\varepsilon})(x,0)\phi(x)dx\\
	=&\int_0^T\int_{\Omega^{'}}-2\Big((\omega\times v)^{\varepsilon} -\omega^{\varepsilon}\times v^{\varepsilon} \Big)\cdot\omega^{\varepsilon}\phi(x)+\left(P(\rho )+\f12|v|^{2}\right)^{\varepsilon}\omega^{\varepsilon}\cdot \nabla\phi(x)\\
	&-\left( \left(  v\times\omega \right)^{\varepsilon}-v^{\varepsilon}\times \omega^{\varepsilon}\right)\cdot\left(\nabla\phi(x)\times v^{\varepsilon}\right)-\left(v^{\varepsilon}\times \omega^{\varepsilon}\right)\cdot\left(\nabla\phi(x)\times v^{\varepsilon}\right) dxdt\\
	&=I_1+I_2+I_3+I_4.
		\ea
	\end{equation}
	Now it suffices to show that $I_1-I_4$ tends to zero as $\varepsilon\rightarrow0$.
	
First, by the H\"older inequality, one has
	\begin{equation}\label{3.39}\ba
	|I_1|\leq &C \|(\omega \times v)^{\varepsilon}-\omega^{\varepsilon}\times v^{\varepsilon}\|_{L^{\frac{3}{2}}(0,T;L^{\frac{3}{2}}(\Omega^{'}))}\|\omega ^\varepsilon\|_{L^3(0,T;L^3(\Omega^{'}))},
	\ea\end{equation}
which together with $v\in L^3(0,T;L^3(\Omega)),\ \omega\in L^3(0,T;L^3(\Omega))$ and Lemma \ref{lem2.2} gives
	$$
	\|(\omega \times v)^{\varepsilon}-\omega^{\varepsilon} \times v^{\varepsilon}\|_{L^{\frac{3}{2}}(0,T;L^{\frac{3}{2}}(\Omega^{'}))}\rightarrow 0,\ \ \text{as}\ \varepsilon\rightarrow 0.
	$$
	Hence as $\varepsilon\to 0$, there holds
	\begin{equation}\label{3.40}
		|I_1| \leq o(1).
	\end{equation}
	For $I_2$,  using the Lebesgue dominated convergence theorem, as $\varepsilon\to 0$, one has
	\begin{equation}\label{3.41}
		\ba
	I_2&=\int_0^T\int_{\Omega^{'}}	\left(P(\rho )+\f12|v|^{2}\right)^{\varepsilon}\omega^{\varepsilon}\cdot \nabla \phi(x)dxdt\to \int_0^T\int_{\Omega}	\left(P(\rho )+\f12|v|^{2}\right)\omega\cdot \nabla \phi(x)dxdt.
		\ea
	\end{equation}
Next, to control $I_3$, combining the H\"older inequality and Lemma \ref{lem2.2}, we have
\begin{equation}\label{3.44}
	\ba
	|I_3|=&|\int_0^T\int_{\Omega^{'}}\left( \B(  v\times\omega \B)^{\varepsilon}-v^{\varepsilon}\times \omega^{\varepsilon}\right)\cdot\B(\nabla\phi(x)\times v^{\varepsilon}\B)dxdt|\\
	\leq &C\|\B( \left(  v\times\omega \right)^{\varepsilon}-v^{\varepsilon}\times \omega^{\varepsilon}\B)\|_{L^{\f{3}{2}}(0,T;L^{\f{3}{2}}(\Omega^{'}))}\|v^{\varepsilon}\|_{L^3(0,T;L^3(\Omega^{'}))}\rightarrow 0,\ \text{as}\ \varepsilon\rightarrow 0.
	\ea
\end{equation}
Since for any vector field $\overrightarrow{u},\overrightarrow{v},\overrightarrow{w},\overrightarrow{x}$, the following identity holds
\begin{equation}\label{3.45}
	(\overrightarrow{u}\times \overrightarrow{v})\cdot(\overrightarrow{w}\times \overrightarrow{x})=(\overrightarrow{u}\cdot \overrightarrow{w})(\overrightarrow{v}\cdot \overrightarrow{x})-(\overrightarrow{u}\cdot \overrightarrow{x})(\overrightarrow{v}\cdot\overrightarrow{w}).
\end{equation}
Then, taking into account \eqref{3.45} and Lebesgue dominated convergence theorem, $I_4$ can be estimated as
\begin{equation}\label{3.46}
	\ba
	I_4&=\int_0^T\int_{\Omega^{'}}\left(v^{\varepsilon}\times \omega^{\varepsilon}\right)\cdot\B(\nabla\phi(x)\times v^{\varepsilon}\B) dxdt\\
	&=\int_0^T\int_{\Omega^{'}}\B(v^{\varepsilon}\cdot\nabla \phi(x)\B)\left(\omega^{\varepsilon}\cdot v^{\varepsilon}\right)-\left(v^{\varepsilon}\cdot v^{\varepsilon}\right)\B(\omega^{\varepsilon}\cdot \nabla \phi(x)\B)dxdt\\
	&\to \int_0^T\int_{\Omega}\B(v\cdot\nabla \phi(x)\B)\left(\omega\cdot v\right)-\left(v\cdot v\right)\B(\omega\cdot \nabla \phi(x)\B)dxdt, \ \text{as} \ \varepsilon\to0.
	\ea
\end{equation}
Then, combining the Lebesgue dominated convergence theorem once again  and \eqref{3.40}, \eqref{3.41}, \eqref{3.44} and \eqref{3.46}, we can get the local helicity conservation for compressible Euler equations, i.e. for any smooth function $\phi(x)\in C_0^\infty(\Omega)$, it holds that
\begin{equation}\label{3.47}\begin{aligned}	&\int_{\Omega}(v\cdot\omega)(T,x)\phi(x)dx-\int_{\Omega}(v\cdot\omega)(0,x)\phi(x)dx\\
		-&\int_0^T\int_{\Omega}	\left(P(\rho )-\f12|v|^{2}\right)\omega\cdot \nabla \phi(x)+\left(\omega\cdot v\right)\B(v\cdot\nabla \phi(x)\B)dxdt=0.\end{aligned}\end{equation}

\underline{Step 2}:   Next, we will extend the local helicity conservation to be a global one. Before we start the proof of this part, we recall the cut-off function used in
\cite{[BGSTW]} to capture  the affect of the boundary.
Let $\Omega$ be an open set with a bounded Lipschitz boundary $\partial\Omega$,
therefore the exterior normal vector to the boundary denoted by $\vec{n}(x)$ is defined almost everywhere.
 Then we observe that there  exists
a (small enough) $\varepsilon_{0}$ with the following properties: For $d(x,\partial\Omega)\leq  \varepsilon_{0} $, the function $x\rightarrow d(x,\partial\Omega)$
belongs to $W^{1,\infty}(\Omega)$ and  for almost every such $x$, there exists a unique point  $\sigma(x)\in\partial\Omega $ such that
$$d(x,\partial\Omega)<\varepsilon_{0}\Rightarrow d(x,\partial\Omega)=|x-\sigma(x)|\ \text{and}\ \nabla_{x}d(x,\partial\Omega)=-\vec{n}(\sigma(x)).$$
Before going further, we consider the smooth function $\psi(z)\in[0,1]$ with
$\psi(z)=0, z\in(-\infty,\f12]$  and $\psi(z)=1, z\in[1,\infty).$
Hence, for any $0<\varepsilon<\varepsilon_{0}$, we define the compactly supported function $\theta_{\varepsilon}(x)=\psi(\f{d(x,\partial\Omega)}{\varepsilon})$ which satisfies the following properties
\be\label{testproperties}
\left\{\ba
&\theta_{\varepsilon}(x)=0, \ \text{if}\ x\in \Omega_{\f{\varepsilon}{2}} \ \text{and}\
\theta_{\varepsilon}(x)=1, \ \text{if}\ x\in \Omega^{\varepsilon},
\\
&  \nabla\theta_{\varepsilon}(x)=-\f{1}{\varepsilon}\psi'(\f{d(x,\partial\Omega)}{\varepsilon})\vec{n}(\sigma(x))\neq0 ,\ \text{if}\  \f{\varepsilon}{2}<d(x,\partial\Omega)<\varepsilon. \ea\right.
\ee
Then let  $\phi(x)=\theta_\varepsilon(x)$ in \eqref{3.47}, we have
\begin{equation}\label{3.48}\begin{aligned}
		&\int_{\Omega}(v\cdot\omega)(T,x)\theta_\varepsilon(x)dx-\int_{\Omega}(v\cdot\omega)(0,x)\theta_\varepsilon(x)dx\\
		-&\int_0^T\int_{\Omega}	\left(P(\rho )-\f12|v|^{2}\right)\omega\cdot \nabla \theta_\varepsilon(x)+\left(\omega\cdot v\right)\B(v\cdot\nabla \theta_\varepsilon(x)\B)dxdt=0.\end{aligned}\end{equation}
	For the third term of above equation, due to the continuous condition \eqref{a2}, we know that
	\begin{equation}\label{3.49}
		\begin{aligned}
		&	|\int_0^T\int_{\Omega}	\left(P(\rho )-\f12|v|^{2}\right)\omega\cdot \nabla \theta_\varepsilon(x)+\left(\omega\cdot v\right)\B(v\cdot\nabla \theta_\varepsilon(x)\B)dxdt|\\
			\leq &C\int_0^T \f{1}{\varepsilon}\int_{\f{\varepsilon}{2}<d(x,\partial \Omega)<\varepsilon}	\B|\left(P(\rho )-\f12|v|^{2}\right)\omega\cdot  \vec{n}(\sigma(x))+\left(\omega\cdot v\right)v\cdot \vec{n}(\sigma(x))\B|dxdt
			\to  0.
		\end{aligned}
	\end{equation}
Combining \eqref{3.48}, \eqref{3.49} and the Lebesgue dominated convergence theorem, we have
\begin{equation}\label{3.50}
	\int_{\Omega}(v\cdot\omega)(T,x)dx=\int_{\Omega}(v\cdot\omega)(0,x)dx=0.
	\end{equation}
 Thus the proof of Theorem \ref{the1} is completed.
\end{proof}

{\bf Conflict of Interest Statement}

This work does not have any conflicts of interest.

{\bf Data Availability}

This publication is supported by multiple datasets, which are openly available at locations cited in the reference section.

\end{document}